\documentclass{amsart}
\numberwithin{equation}{section}
\usepackage{graphicx}
\usepackage{wrapfig}
\usepackage{textcomp}
\usepackage{lmodern}
\usepackage{color}
\newtheorem{theorem}{Theorem}[section]
\newtheorem{lemma}[theorem]{Lemma}
\newtheorem{corollary}[theorem]{Corollary}
\newtheorem{proposition}[theorem]{Proposition}

\newtheorem{remark}[theorem]{Remark}

\begin{document}

\author{Ravshan Ashurov}
\address{Ashurov R: V. I. Romanovskiy Institute of Mathematics, Uzbekistan Academy of Sciences, Tashkent, 100174 Uzbekistan. \\
Central Asian University, Mirzo Ulugbek District, Tashkent, 111221 Uzbekistan.
}
\email{ashurovr@gmail.com}

\author{Masahiro Yamamoto}
\address{Yamamoto M.: Graduate School of Mathematical Sciences, The University of Tokyo, Komaba, Meguro, Tokyo 153-8914
Japan.\\
Department of Mathematics, Faculty of Arts and Sciences, Zonguldak B\"ulent Ecevit University, Zonguldak
67100 Turkey. \\
Correspondence Member of Accademia Peloritana dei Pericolanti, Palazzo Universit\`a, Piazza S.~Pugliatti 1, 98122 Messina, Italy.\\
Honorary Member of the Academy of Romanian Scientists, Ilfov, nr.~3, 050094 Bucharest, Romania.}
\email{myama@ms.u-tokyo.ac.jp}

\small

\title[Uniqueness for an inverse problem of determining order ...]{Uniqueness for an inverse problem of determining order and temporal factor of the source for time-fractional evolution equations}

\begin{abstract} 
This paper addresses the inverse problem of simultaneously recovering the fractional order $\alpha \in (0,1)\cup (1,2)$ and the time-dependent source factor $p(t)$ in the Cauchy problem for an evolution equation with a general self-adjoint operator $A$ in a Hilbert space $X$. The overdetermination condition is given by the scalar product $( u(t), \psi)_X$ for $0 < t < T$, where $\psi \in D(A)$ is an arbitrary fixed element.  

Uniqueness of the fractional order $\alpha$ is established independently of the specific form of the elliptic operator $A$ and the source function $p(t)$. Furthermore, uniqueness of the factor $p(t)$ is proved not only under the trivial overdetermination $( u(t), \psi)_X = 0$ for all $t \in (0,T)$, but also when the function $t \mapsto ( u(t), \psi)_X$ possesses sufficient smoothness.  

The proof relies on a decomposition of the solution near $t=0$ into a least smooth component and a smoother remainder.

{\it Keywords}: Time-fractional evolution equation, the Caputo derivative, inverse problems.
\end{abstract}

\maketitle

\section{Introduction and main result}
Let $X$ be a Hilbert space with the scalar product $(\cdot, \cdot)_{X}$ and the norm $\|\cdot\|_{X}$. Let $A$ be a positive definite self-adjoint operator with the compact inverse. We denote the domain of the operator $A$ by $\mathcal{D}(A)$.

We define a fractional time derivative in a Hilbert space $X$ (e.g., Yamamoto \cite{7}, \cite{8}), which provides a convenient framework and is equivalent to the classical Caputo derivative if functions under consideration are smooth. 

Let $0<\alpha<1$. For vector-valued functions, fractional Riemann-Liouville integrals are defined in the same way as for scalar functions (the integral is understood in the sense of the Bochner integral; see, e.g., \cite{Liz})
$$
J^{\alpha} w(t):=\frac{1}{\Gamma(\alpha)} \int_{0}^{t}(t-s)^{\alpha-1} w(s) d s
$$
for $w \in L^{2}(0, T ; X)$. Then, we can readily prove that $J^{\alpha}: L^{2}(0, T ; X) \longrightarrow L^{2}(0, T ; X)$ is injective (see, e.g., \cite{2}). Therefore, we can define
$$
\partial_{t}^{\alpha}:=\left(J^{\alpha}\right)^{-1}, \quad \mathcal{D}\left(\partial_{t}^{\alpha}\right)=J^{\alpha} L^{2}(0, T ; X) .
$$
Then, it is easy that we define a Banach space $H_{\alpha}(0, T ; X):=J^{\alpha} L^{2}(0, T ; X)$ with the norm $\|v\|_{H_{\alpha}(0, T ; X)}:=\left\|\left(J^{\alpha}\right)^{-1} v\right\|_{L^{2}(0, T ; X)}$. Moreover, we can readily verify that $\partial_{t}^{\alpha}$ is a closed operator defined in $H_{\alpha}(0, T ; X)$. Then, we can see (see Kubica, Ryszewska and  Yamamoto \cite{4}, Chapters 2, when $X= \mathbb{R}$; see Chapter 4 of \cite{4} and Theorem 2.4.1 
in Yamamoto \cite{8} for a Hilbert space $X$):
\begin{equation}\label{Halpha}
H_{\alpha}(0, T ; X)=\left\{
\begin{aligned}
&H^{\alpha}(0, T ; X),
&& 0 < \alpha < \frac{1}{2}, \\
& \bigg\{v \in H^{\frac{1}{2}}(0, T ; X); \, \int_0^T \frac{||v(t)||_X^2}{t} dt <\infty \bigg\},
&&\alpha = \frac{1}{2}, \\
& \left\{v \in H^{\alpha}(0, T ; X) ; \,\,v(0)=0\right\}, 
&& \frac{1}{2}<\alpha<1.
\end{aligned}
\right.
\end{equation}
Here $H^{\alpha}(0, T ; X)$ is a Sobolev-Slobodeckij space (e.g., Adams \cite{1}, Chapter 7), and the norm $\|v\|_{H^{\alpha}(0, T ; X)}$ is equivalent to
$$
\left(\|v\|^2_{L^{2}(0, T ; X)}+\int_{0}^{T} \int_{0}^{T} \frac{\|v(t)-v(s)\|_{X}^{2}}{|t-s|^{2 \alpha+1}} d t d s\right)^{\frac{1}{2}}.
$$
Consequently, functions from $H_{\alpha}(0, T ; X)$, $\frac{1}{2} < \alpha <1$, according to the Sobolev embedding theorem (e.g., \cite{1}, Chapter 5), are continuous with respect to $t$ and vanish at the point $t=0$, while functions from $H_{\alpha}(0, T ; X)$ at $0 < \alpha<\frac{1}{2}$,  are generally not continuous and therefore one cannot speak of the value of the function at zero: $t=0$.

Now let $\alpha = 1+ \sigma$, $\sigma\in (0,1)$, and set $H_{1}(0, T ; X) = \left\{v \in H^{1}(0, T ; X) ; \,\,v(0)=0\right\}$. Then we define
\begin{equation}\label{Hbeta}
    H_{\alpha}(0, T ; X) = H_{1+\sigma}(0, T ; X) = \left\{v \in H_{1}(0, T ; X) ; \,\,\frac{d v}{d t}\in H_\sigma(0, T ; X) \right\},
\end{equation}
the derivative is understood in a strong sense. This space was introduced in Yamamoto \cite{7}, Chapter 2 for $X= \mathbb{R}$ and in Chapter 6 for $X= L_2$.
In the same paper it is proved that $J^{1+\sigma}: L^{2}(0, T ; X) \longrightarrow H_{1+\sigma}(0, T ; X)$ is an isomorphism. With this in mind, the author of the work \cite{7} introduced the following definitions:
\[
\partial_t^{\alpha}  = \partial_t^{1+\sigma} = (J^{1+\sigma})^{-1},\,\, \mathcal{D}\left(\partial_{t}^{1+\sigma}\right) = H_{1+\sigma} (0, T; X).
\]

Now we are ready to formulate our inverse problems and main results.

Let $\alpha\in (0,1) \cup (1,2)$. For fixed $f \in X$, we consider the solution $u_{p, \alpha}$ to
\begin{equation}\label{sol}
\partial_{t}^{\alpha} u(t)=-A u(t)+p(t) f \quad \text { in } X \times(0, \infty), \quad u\in H_{\alpha}(0, T ; X).
\end{equation}
Let us emphasize that the inclusion $u\in H_{\alpha}(0, T ; X)$ here replaces the initial condition. If $\frac{1}{2}<\alpha<1$, then we indeed have the Cauchy condition:  $u(0)=0$ (see (\ref{Halpha}). Similarly, if $\alpha= 1+ \sigma$, and $\frac{1}{2}<\sigma<1$, then $u(0)=0$ and $u'(0)=0$ (see (\ref{Hbeta})).
For other cases of $\alpha$, we cannot define $u(0)$ and/or $u'(0) = 0$ 
themselves in the sense of the trace at $t=0$, but the unique existence 
of the solution $u$ to \eqref{sol} is proved.
 (e.g., Huang and Yamamoto \cite{HY}). 
Indeed, it is known  (e.g., \cite{7}, \cite{8}) that \eqref{sol} possesses a 
unique solution $u_{p, \alpha} \in H_{\alpha}(0, T ; X) \cap 
L^{2}(0, T ; \mathcal{D}(A))$. 

Let us call \eqref{sol} the forward problem.
Our primary goal in this paper is to investigate the inverse problem of 
simultaneously determining both the source function $p(t)$ and the exponent of 
a fractional derivative $\alpha$. 

We will consider several different inverse problems, which can be summarized as follows:

\textbf{Inverse  problem:}

Let $\psi \in X$ be fixed. Determine $\alpha\in (0,1) \cup (1,2)$ and $p \in 
C[0, T]$ by data $(u(t), \psi)_{X}$ for $0<t<T$.

Numerous works by various authors have been devoted to the study of such 
problems. 
A detailed overview of the most significant contributions is presented in the 
survey papers 
by Li, Liu, and Yamamoto~\cite{LiYamamoto1} and by Li and Yamamoto~\cite{LiYamamoto2}. 
An analysis of the results reported in~\cite{LiYamamoto1}--\cite{LiYamamoto2} shows that, 
in most of these works, subdiffusion equations are considered, and the unknown parameter 
$\alpha$ is recovered from the observation data $u(x_0,t)$, $0 < t \leq T$, where $x_0$ 
is a fixed point in the domain $\Omega$ in which the equation is posed.

Let now $\psi \in X$ be a weight function. We then ask whether the fractional order 
$\alpha$ can be determined from the additional information
\begin{equation}\label{overdetermination}
    (u(t_0), \psi)_X = d_0,
\end{equation}
for some fixed time $t_0$ and constant $d_0$. 
This scalar product can be interpreted as a weighted average of the solution 
$u(t)$ at time $t_0$.
We emphasize that data \eqref{overdetermination} 
are the minimum for the inverse problem, because
an unknown is one number $\alpha$, and so the uniqueness result with the 
data is the best possible.   
Indeed, in~\cite{AU2020}, an affirmative answer to this question is obtained in the case where 
the weight function is the first eigenfunction of the operator $A$. 
Moreover, in~\cite{PskhuUzmat}, 
an affirmative result is established for the case where $\psi(x)$ is an arbitrary harmonic 
function (i.e., $\Delta \psi(x) = 0$ and $A = -\Delta$) that satisfies the Tikhonov-type condition; in that work, the problem is considered in the whole space $\mathbb{R}^N$.

The present paper shows that, in condition~\eqref{overdetermination}, an arbitrary element 
$\psi \in D(A)$ can be used as a weight function, provided that the condition is imposed 
not at a fixed time $t=t_0$, but for all $t \in (0,T)$. Furthermore, we prove the uniqueness 
not only of the fractional order $\alpha$ but also of the time-dependent factor $p(t)$ in 
the source term.

Let us formulate our main result. For this, we will look at another system
\begin{equation}\label{sol11}
\partial_{t}^{\beta} \widetilde{u}(t)=-\widetilde{A} \widetilde{u}(t)+q(t) f \quad \text { in } X \times(0, \infty), \quad  \widetilde{u}\in H_{\beta}(0, T ; X), \quad \beta\in (0, 1)\cup (1,2),
\end{equation}
where $\widetilde{A}$ is a positive definite self-adjoint operator with the compact inverse. 
Then there exists a unique solution $\widetilde{u}_{q, \beta} \in H_{\beta}(0, T ; X) \cap L^{2}(0, T ; \mathcal{D}(\widetilde{A}))$.

\textbf{Inverse Problem 1.} 
Does the equality
\[
(u_{p,\alpha}(t), \psi)_X = (u_{q,\beta}(t), \widetilde{\psi})_X, \quad 0 < t < T,
\]
for some \(\psi, \widetilde{\psi} \in X\), imply that \(\alpha = \beta\) and \(p = q\) on \((0,T)\)?

To study this question, we introduce the following admissible class of unknown functions \(p\) and \(q\):

$$
\mathcal{P} := \Bigl\{ p \in C[0,T] : \text{there exist } C = C(p) > 0,\ 
m = m(p) \in \mathbb{Z}_{\geq 0},\ a = a(p) > 0,
$$
$$ \text{and}\ p_0 = p_0(p) \neq 0\text{ such that } a - m \geq \varepsilon > 0 \text{ and }
$$
\begin{equation}
\bigl| p(t) - p_0 t^m \bigr| \leq C t^a \quad \text{for all}\ 0 \leq t \leq T \Bigr\}.
\end{equation}

Here and henceforth we write
$\mathbb{Z}_{\geq 0}:= \{ m\in \mathbb{Z};\, m\ge 0\}$.

A typical example of this class is
\[
\mathcal{P}_1 := \Bigl\{ p \in C^\ell[0,T] \text{ for some } \ell = \ell(p) \in \mathbb{N} : 
\text{there exists } \ell_0 = \ell_0(p) \in \{0,1,\dots,\ell-1\}
\]
\[
\text{ such that } \frac{d^{\ell_0} p}{dt^{\ell_0}}(0) \neq 0 \Bigr\}.
\]

In this case \(a = m + 1\) and \(\varepsilon = 1\).

\begin{theorem}\label{thm1}
Let \(p, q \in \mathcal{P}\), \(\psi \in \mathcal{D}(A)\), \(\widetilde{\psi} \in \mathcal{D}(\widetilde{A})\), and
\[
(f,\psi)_X \neq 0, \qquad (\widetilde{f},\widetilde{\psi})_X \neq 0.
\]
If
\[
(u_{p,\alpha}(t), \psi)_X = (u_{q,\beta}(t), \widetilde{\psi})_X, \quad 0 < t < T,
\]
and both parameters \(\alpha\) and \(\beta\) belong either to \((0,1)\) or to \((1,2)\), then \(\alpha = \beta\).

Furthermore, assume that \(A = \widetilde{A}\), \((f,\psi)_X \neq 0\), and \(\psi \in \mathcal{D}(A)\). If
\[
(u_{p,\alpha}(t), \psi)_X = (u_{q,\beta}(t), \psi)_X, \quad 0 < t < T,
\]
and \(p,q \in \mathcal{P}\), then \(\alpha = \beta\) and \(p = q\) on \((0,T)\).
\end{theorem}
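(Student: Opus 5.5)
Taking the scalar product of \eqref{sol} with $\psi\in\mathcal D(A)$ and using self-adjointness, the function $w(t):=(u_{p,\alpha}(t),\psi)_X$ satisfies
\[
\partial_t^\alpha w(t) = -(u_{p,\alpha}(t),A\psi)_X + p(t)(f,\psi)_X ,
\]
so $w = (f,\psi)_X J^\alpha p - J^\alpha (u_{p,\alpha},A\psi)_X$. The plan is to decompose $w$ near $t=0$ into a least smooth leading term and a smoother remainder. Since $p\in\mathcal P$, we have $p(t)=p_0t^m+O(t^a)$, and therefore
\[
J^\alpha p(t) = p_0\frac{\Gamma(m+1)}{\Gamma(m+1+\alpha)}t^{m+\alpha}+O(t^{a+\alpha}).
\]
For the remainder term I will use the eigenfunction expansion $u_{p,\alpha}(t)=\sum_n\int_0^t (t-s)^{\alpha-1}E_{\alpha,\alpha}(-\lambda_n(t-s)^\alpha)p(s)\,ds\,(f,\varphi_n)\varphi_n$. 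Together with $|p(s)|\le Cs^m$ and the bound $|E_{\alpha,\alpha}(-x)|\le C$, this gives $\|u_{p,\alpha}(t)\|_X\le Ct^{m+\alpha}$. As a result $|(u,A\psi)_X|\le C\|A\psi\|t^{m+\alpha}$, and applying $J^\alpha$ to this yields $O(t^{m+2\alpha})$.

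This step needs care for $\alpha\in(1,2)$, where the Mittag-Leffler function need not be monotone. There I will use the uniform bound $|E_{\alpha,\alpha}(-x)|\le C/(1+x)$, which holds for $0<\alpha<2$. I expect this estimate, uniform in $n$ and keeping the exact power of $t$, to be the main technical obstacle. The conclusion of this step is
\[
w(t)=c_0 t^{m+\alpha}+O(t^{m+\alpha+\min(\varepsilon,\alpha)}),\qquad c_0=\frac{(f,\psi)_X p_0\, m!}{\Gamma(m+1+\alpha)}\neq0 .
\]

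The analogous expansion for the other system is $\widetilde c_0t^{m'+\beta}+O(\cdot)$ with $\widetilde c_0\ne0$. Equating the two as $t\to0$ gives $m+\alpha=m'+\beta$, since two leading powers with nonzero coefficients must coincide. Because $m,m'$ are integers and $\alpha,\beta$ lie in the same interval $(0,1)$ or $(1,2)$, the difference $\alpha-\beta=m'-m$ is an integer of absolute value less than $1$. Hence $\alpha=\beta$, and also $m=m'$. This proves the first claim.

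For the second claim, $\alpha=\beta$ by the first part, and $A=\widetilde A$. Set $r=p-q\in C[0,T]$; by linearity $v:=u_{p,\alpha}-u_{q,\alpha}$ solves \eqref{sol} with source $r(t)f$ and satisfies $(v(t),\psi)_X=0$ on $(0,T)$. Using the representation above, this reads
\[
\int_0^t K(t-s)r(s)\,ds=0,\qquad K(t)=\sum_n (f,\varphi_n)(\psi,\varphi_n)\,t^{\alpha-1}E_{\alpha,\alpha}(-\lambda_nt^\alpha).
\]
Applying $\partial_t^\alpha$, i.e. using the equation itself, gives the Volterra equation of the second kind
\[
(f,\psi)_X\, r(t)=J^\alpha\big((v(\cdot),A\psi)_X\big)(t)\quad\text{differentiated appropriately},
\]
or more precisely $(f,\psi)_X r(t)=(v(t),A\psi)_X$. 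Since $\psi\in\mathcal D(A)$, the right side equals $\int_0^t K_1(t-s)r(s)\,ds$ with $K_1(t)=\sum_n\lambda_n(f,\varphi_n)(\psi,\varphi_n)t^{\alpha-1}E_{\alpha,\alpha}(-\lambda_nt^\alpha)$. Moreover $|K_1(t)|\le C\|f\|\|A\psi\|t^{\alpha-1}$, which is integrable. Because $(f,\psi)_X\ne0$, this is a homogeneous Volterra equation of the second kind with a weakly singular kernel. The standard iteration/Gronwall argument, $|r(t)|\le C^k\Gamma(\alpha)^k\|r\|_\infty t^{k\alpha}/\Gamma(k\alpha+1)\to0$, then gives $r\equiv0$, i.e. $p=q$ on $(0,T)$.
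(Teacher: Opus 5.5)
Your proposal is correct and is essentially the paper's proof. Your remainder $-J^\alpha\bigl((u_{p,\alpha},A\psi)_X\bigr)$ is exactly the paper's $R_{p,\alpha}$ from Lemma~\ref{lem31}; you obtain it from the equation rather than from the identity $E_{\alpha,\alpha}(z)=1/\Gamma(\alpha)+zE_{\alpha,2\alpha}(z)$. The exponent matching $\alpha+m=\beta+m'$ is the same as the paper's, and your relation $(f,\psi)_X r=(v,A\psi)_X$ is Lemma~\ref{lem3}(i) with $\gamma=\alpha$, closed by the same Gronwall argument. As in the paper, your step ``$\alpha=\beta$ by the first part'' tacitly carries over the hypothesis that $\alpha$ and $\beta$ lie in the same interval $(0,1)$ or $(1,2)$.
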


We now introduce a narrower admissible class which allows us to remove the a priori restriction on the interval containing \(\alpha\) and \(\beta\):

\[
\widetilde{\mathcal{P}} := \Bigl\{ p \in C[0,T] : \text{there exist } C = C(p) > 0,\ 
m = m(p) \in \mathbb{Z}_{\geq 0},\ a = a(p) > 0, \]
$$
\text{ and }\ p_0 \neq 0
\text{ such that } a \geq m + 2 \text{ and }$$
\begin{equation}
\bigl| p(t) - p_0 t^m \bigr| \leq C t^a \quad \text{ for all } \ 0 \leq t \leq T \Bigr\}.
\end{equation}

An example of such a class is
\[
\widetilde{\mathcal{P}}_1 := \Bigl\{ p \in C^\ell[0,T] : \text{ there exists } \ell_0 = \ell_0(p) \in \{0,1,\dots,\ell-1\}
\]
$$ \text{ such that } \frac{d^{\ell_0} p}{dt^{\ell_0}}(0) \neq 0 \text{ and } \frac{d^{\ell_0+1} p}{dt^{\ell_0+1}}(0) = 0 \Bigr\}.$$

In this case \(a \geq m + 2\).

\begin{theorem}\label{thm11}
Let \(p, q \in \widetilde{\mathcal{P}}\), \(\psi \in \mathcal{D}(A^2)\), \(\widetilde{\psi} \in \mathcal{D}(\widetilde{A^2})\), and
\[
(f,\psi)_X \neq 0, \qquad (\widetilde{f},\widetilde{\psi})_X \neq 0.
\]
Then the equality
\[
(u_{p,\alpha}(t), \psi)_X = (u_{q,\beta}(t), \widetilde{\psi})_X, \quad 0 < t < T,
\]
implies \(\alpha = \beta\) even if \(\alpha, \beta \in (0,1) \cup (1,2)\).
\end{theorem}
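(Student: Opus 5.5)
We plan to expand the scalar data near $t=0$ into a leading power of $t$ plus a remainder, and to read $\alpha$ off the exponent of that leading power, exactly as for Theorem \ref{thm1}. The improvement $a\ge m+2$ in $\widetilde{\mathcal P}$ and $\psi\in\mathcal D(A^2)$ are used to push the remainder far enough out that no exponent coincidence can occur when $\alpha,\beta$ come from different intervals.

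\textbf{Step 1 (scalar reduction).} Set $w(t)=(u_{p,\alpha}(t),\psi)_X$. Since $\psi\in\mathcal D(A)$, pairing \eqref{sol} with $\psi$ and applying $J^\alpha$ gives
\[
w(t) = -J^\alpha (u, A\psi)_X(t) + (f,\psi)_X\, J^\alpha p(t).
\]
Iterating once more, using $A\psi\in\mathcal D(A)$ (this is where $\psi\in\mathcal D(A^2)$ enters), we get
\[
w = (f,\psi)_X J^\alpha p - (f,A\psi)_X J^{2\alpha}p + J^{2\alpha}(u,A^2\psi)_X .
\]
The last term is $O(t^{2\alpha+\alpha+m-\delta})$, by the regularity $u\in H_\alpha\cap L^2(0,T;\mathcal D(A))$ together with a bound $|(u(t),\phi)|\le Ct^{\alpha+m}$, which is itself obtained from the same identity. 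For a cleaner argument I would instead use the eigenfunction expansion
\[
(u(t),\psi)_X=\sum_n (f,\varphi_n)(\psi,\varphi_n)\int_0^t s^{\alpha-1}E_{\alpha,\alpha}(-\lambda_n s^\alpha)\,p(t-s)\,ds
\]
and expand $E_{\alpha,\alpha}$ to two terms, with a remainder controlled by $\lambda_n^2 s^{2\alpha}$. The sum $\sum_n |(f,\varphi_n)|\lambda_n^2|(\psi,\varphi_n)|$ converges because $\lambda_n^2(\psi,\varphi_n)\in\ell^2$.

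\textbf{Step 2 (asymptotics).} With $p=p_0t^m+O(t^a)$ and $J^\alpha t^m=\frac{\Gamma(m+1)}{\Gamma(m+1+\alpha)}t^{m+\alpha}$, we obtain
\[
w(t)=(f,\psi)_X\,p_0\,\frac{m!}{\Gamma(m+1+\alpha)}\,t^{m+\alpha}+O\bigl(t^{m+\alpha+\min(\alpha,2)}\bigr).
\]
Here $J^\alpha O(t^a)=O(t^{a+\alpha})$ with $a\ge m+2$, and the $J^{2\alpha}$ term is $O(t^{m+2\alpha})$. The analogous expansion holds for the data built from $q,\beta,\widetilde A,\widetilde\psi$, with leading exponent $\widetilde m+\beta$ and a nonzero coefficient.

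\textbf{Step 3 (comparison).} Equality of the data on $(0,T)$ forces the leading exponents to agree, so $m+\alpha=\widetilde m+\beta$; the coefficients are nonzero since $(f,\psi)_X\ne0$, $(\widetilde f,\widetilde\psi)_X\ne0$, $p_0\ne0$. Hence $\alpha-\beta\in\mathbb Z$, and since $\alpha,\beta\in(0,1)\cup(1,2)$, either $\alpha=\beta$ or $|\alpha-\beta|=1$. Suppose, say, $\beta=\alpha+1$ with $\alpha\in(0,1)$, so $m=\widetilde m+1\ge1$.

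To exclude this case, compare the next-order structure. I would expand further: in $w$ the term of order $t^{m+2\alpha}$ has coefficient $-(f,A\psi)p_0\,m!/\Gamma(m+1+2\alpha)$, and the $p$-remainder contributes only at order $\ge m+2+\alpha$. On the $\beta$ side, the second term has exponent $\widetilde m+2\beta=m+1+2\alpha$. Continuing the iteration $k$ times, with enough regularity, shows that the $\alpha$ side consists of exponents $m+k\alpha$ below $m+2+\alpha$, while the $\beta$ side consists of exponents $m+1+\alpha+(k-1)(\alpha+1)$. The exponent $m+2\alpha$, which lies strictly between $m+\alpha$ and $m+\alpha+1$, must then vanish, forcing $(f,A\psi)_X=0$, and similarly for subsequent terms. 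This is not immediately contradictory.

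\textbf{Main obstacle.} The hardest step is ruling out $|\alpha-\beta|=1$ when the subleading coefficients may vanish. If the iteration stalls I would try a different route: the data $w$ satisfies $w=(f,\psi)J^\alpha p+O(t^{m+2\alpha})$. Applying $\partial_t^\alpha$ versus $\partial_t^\beta$ and using $p\in C[0,T]$ gives a contradiction, because on the $\beta$ side $\partial_t^{\alpha}$ of the data behaves like $t^{m}$ times a constant, whereas regularity of $p$ near $0$ at order $a\ge m+2$ prevents the mismatch at order $t^{m+\alpha}$. I would work this out concretely by taking the Laplace transform of both expansions and comparing singularities.
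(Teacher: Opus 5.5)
Your Steps 1--2 are fine; they are the paper's Lemma \ref{lem311} plus the same asymptotic bookkeeping. You also located the difficulty correctly. But the gap in Step 3 is genuine, and no argument can close it. Continuing the iteration only forces further moments $(f,A^k\psi)_X$ with $k\alpha<2$, and $(\widetilde f,\widetilde A\widetilde\psi)_X$, to vanish. All of these can be arranged simultaneously, and the ``$\partial_t^\alpha$ versus $\partial_t^\beta$ / Laplace transform'' idea is not yet an argument. In fact, under the stated hypotheses the case $|\alpha-\beta|=1$ actually occurs.

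Take $A=\widetilde A$ with simple eigenvalues $\lambda_k$ and orthonormal eigenvectors $\varphi_k$. Let $\alpha\in(0,1)$, $\beta=\alpha+1$, $q\equiv1$, and $f=\sum_{k=1}^N\varphi_k$ with $N\alpha\ge2$. Solve the Vandermonde system for $\psi=\sum_{k=1}^N c_k\varphi_k$ so that $(f,A^j\psi)_X=\sum_k c_k\lambda_k^j=\delta_{j0}$ for $0\le j\le N-1$. Take $\widetilde\psi=d_1\varphi_1+d_2\varphi_2$ with $(f,\widetilde\psi)_X=1$ and $(f,A\widetilde\psi)_X=0$.

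Then $\Psi:=(u_{1,\beta},\widetilde\psi)_X=\sum_{j\ge0}(-1)^j(f,A^j\widetilde\psi)_X\,t^{\beta(j+1)}/\Gamma(\beta(j+1)+1)$. Hence $\partial_t^\alpha\Psi=t+O(t^{3+2\alpha})$. The kernel in \eqref{Volterra} equals $\sum_{j\ge0}(-1)^j(f,A^{j+1}\psi)_X\,\tau^{\alpha(j+1)-1}/\Gamma(\alpha(j+1))=O(\tau^{N\alpha-1})$, with $N\alpha-1\ge1$.

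The continuous solution $p$ of \eqref{Volterra} satisfies $(u_{p,\alpha},\psi)_X=\Psi$ by the existence part of Theorem \ref{thm3}. Gronwall gives $|p(t)|\le Ct$, and then $|p(t)-t|\le Ct^3$ on $[0,T]$. Thus $p,q\in\widetilde{\mathcal P}$, $\psi,\widetilde\psi\in\mathcal D(A^2)$, $(f,\psi)_X=(f,\widetilde\psi)_X=1$, and the data coincide on $(0,T)$. Yet $\alpha\neq\beta$.

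The paper's proof gets past this same point only by asserting ``$p_1,q_1,p_2,q_2\neq0$'' before applying Lemma \ref{gamma1,2,3,4}. However, $p_2$ is a nonzero multiple of $(f,A\psi)_X$ and $q_2$ is a nonzero multiple of $(\widetilde f,\widetilde A\widetilde\psi)_X$, and neither quantity is assumed nonzero. In the example above both vanish.

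If you add the hypotheses $(f,A\psi)_X\neq0$ and $(\widetilde f,\widetilde A\widetilde\psi)_X\neq0$, your Step 3 already completes the proof, and it is essentially the paper's argument. For $\beta=\alpha+1$, everything on the $\beta$-side beyond $q_1t^{m+\alpha}$ is $O(t^{m+1+2\alpha})+O(t^{m+2+\alpha})=o(t^{m+2\alpha})$. On the $\alpha$-side, everything beyond $p_2t^{m+2\alpha}$ is $O(t^{m+3\alpha})+O(t^{m+2+\alpha})$. This forces $(f,A\psi)_X=0$, a contradiction. The case $\alpha=\beta+1$ is symmetric and uses the condition on $\widetilde\psi$.

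So what is missing is not a finer asymptotic argument but an additional hypothesis in the statement.
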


Thus, the theorem asserts that if its conditions are met - note that 
in this case, condition \(\psi \in \mathcal{D}(A^2)\), \(\widetilde{\psi} \in \mathcal{D}(\widetilde{A^2})\) applies -then, even if we do not know a priori in which of the sets $(0, 1)$ or $(1, 2)$ each of the parameters $\alpha$ and $\beta$ lies, they are nonetheless identical.

The uniqueness in determining the order with the assumption $A=\widetilde{A}$ is easily proved (Miller and Yamamoto, \cite{MilYam}, for example). Our main result, Theorems \ref{thm1} and \ref{thm11} assert that the order $\alpha$ is uniquely determined even if we do not know the operator $A$ and the source term.

\textbf{Inverse Problem 2.} Find a solution to the Cauchy problem
\begin{equation}\label{sol1}
\partial_{t}^{\alpha} u(t)=-A u(t)+p(t) f \quad \text { in } X \times(0, \infty), \quad u\in H_{\alpha}(0, T ; X),
\end{equation}
and the source factor $p(t)\in C[0, T]$ using the overdetermination condition 
\begin{equation}\label{overdef1}
\left(u(t), \psi\right)_{X}(t) = \Psi(t), \,\, 0<t<T.
\end{equation}

\begin{theorem}\label{thm3}
Let $\alpha \in (0,1)\cup(1,2)$. Assume that $\psi \in \mathcal{D}(A)$ and 
$(f,\psi)_X \neq 0$. If $\partial_t^\alpha \Psi(t) \in C[0,T]$, then there exists a unique 
solution $\{u,p\}$ of Inverse Problem~2, (\ref{sol1})--(\ref{overdef1}). Moreover, the following two-sided Lipschitz stability estimate holds
\begin{equation}\label{stability}
C^{-1} \|\partial_t^\alpha \Psi\|_{C[0,T]} \leq \|p\|_{C[0,T]} \leq C \|\partial_t^\alpha \Psi\|_{C[0,T]},
\end{equation}
where the constant $C>0$ depends on $f$, $\psi$, and $\alpha$.
\end{theorem}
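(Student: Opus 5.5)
Take the scalar product of \eqref{sol1} with $\psi$ and use self-adjointness of $A$ together with $\psi\in\mathcal{D}(A)$:
\begin{equation*}
\partial_t^\alpha (u(t),\psi)_X = -(u(t),A\psi)_X + p(t)(f,\psi)_X .
\end{equation*}
This step needs $\partial_t^\alpha$ to commute with the bounded functional $v\mapsto (v,\psi)_X$. That holds because $J^\alpha$ commutes with it (Bochner integral), so $(u,\psi)_X\in H_\alpha(0,T)$ and $\partial_t^\alpha(u,\psi)_X=(\partial_t^\alpha u,\psi)_X$. Using \eqref{overdef1}, this gives the key identity
\begin{equation*}
p(t) = \frac{1}{(f,\psi)_X}\Bigl(\partial_t^\alpha\Psi(t) + (u(t),A\psi)_X\Bigr).
\end{equation*}
Here $u=u_{p,\alpha}$ depends linearly on $p$ through the forward solution operator
\begin{equation*}
u_{p,\alpha}(t)=\sum_n \Bigl(\int_0^t (t-s)^{\alpha-1}E_{\alpha,\alpha}(-\lambda_n(t-s)^\alpha)p(s)\,ds\Bigr)(f,\varphi_n)\varphi_n ,
\end{equation*}
where $\lambda_n,\varphi_n$ are the eigenvalues and eigenfunctions of $A$. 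So Inverse Problem 2 reduces to a Volterra equation of the second kind, $p = g + Kp$, with $g=\partial_t^\alpha\Psi/(f,\psi)_X\in C[0,T]$ and
\begin{equation*}
Kp(t)=\frac{1}{(f,\psi)_X}\int_0^t k(t-s)p(s)\,ds,\qquad k(t)=\sum_n t^{\alpha-1}E_{\alpha,\alpha}(-\lambda_n t^\alpha)(f,\varphi_n)(A\psi,\varphi_n).
\end{equation*}

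The main step is to show $k\in L^1(0,T)$, more precisely $|k(t)|\le Ct^{\alpha-1}$. We have $|E_{\alpha,\alpha}(-\lambda t^\alpha)|\le C$ uniformly in $\lambda>0$ for $\alpha\in(0,1)$, and also for $\alpha\in(1,2)$ by the standard bound $|E_{\alpha,\beta}(-x)|\le C/(1+x)$, $x\ge0$. Cauchy–Schwarz then gives
\begin{equation*}
\sum_n |(f,\varphi_n)(A\psi,\varphi_n)|\le \|f\|_X\|A\psi\|_X,
\end{equation*}
which is exactly where $\psi\in\mathcal{D}(A)$ is used. So the series converges absolutely and uniformly on $[\delta,T]$, and $|k(t)|\le C\|f\|_X\|A\psi\|_X t^{\alpha-1}$. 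Hence $K$ maps $C[0,T]$ into itself and is a weakly singular Volterra operator. Its iterates satisfy
\begin{equation*}
\|K^j\|_{C\to C}\le \frac{(C\Gamma(\alpha))^j T^{j\alpha}}{\Gamma(j\alpha+1)},
\end{equation*}
so the Neumann series converges. Therefore $I-K$ is invertible on $C[0,T]$, with $\|(I-K)^{-1}\|\le C$.

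This yields a unique $p\in C[0,T]$. Then set $u=u_{p,\alpha}$, which is the unique solution of the forward problem \eqref{sol1}. To check \eqref{overdef1}, let $w=(u,\psi)_X-\Psi$. By construction $\partial_t^\alpha w=0$. Both terms lie in $H_\alpha(0,T)$: the first as above, and $\Psi$ because $\partial_t^\alpha\Psi$ is assumed to exist, so $\Psi\in\mathcal{D}(\partial_t^\alpha)=H_\alpha$. Since $\partial_t^\alpha=(J^\alpha)^{-1}$ is injective, $w=0$. Uniqueness follows because any solution pair must satisfy the same Volterra equation.

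For the stability estimate \eqref{stability}, the upper bound is $\|p\|\le\|(I-K)^{-1}\|\,\|g\|\le C\|\partial_t^\alpha\Psi\|$. The lower bound follows from $\partial_t^\alpha\Psi=(f,\psi)_X(I-K)p$ together with $\|K\|\le C$. The main technical care is the interchange of $\partial_t^\alpha$ with the scalar product in the fractional-space setting, and the uniform Mittag-Leffler bound for $\alpha\in(1,2)$, where $E_{\alpha,\alpha}$ oscillates but stays bounded on the negative axis.
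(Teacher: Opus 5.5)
Your proposal is correct and is essentially the paper's argument: you reduce to the second-kind Volterra equation \eqref{Volterra}, bound its kernel by $C t^{\alpha-1}\|f\|_X\|A\psi\|_X$ via the Mittag-Leffler estimate and Cauchy--Schwarz, invert it (your Neumann series gives the same $E_\alpha$-type constant as the paper's Gronwall step), and read off the two-sided estimate. Two small advantages of your version: you derive the equation by pairing \eqref{sol1} with $\psi$ rather than through Lemma~\ref{lem3}(i), which is stated only for $\alpha<1$, so $\alpha\in(1,2)$ is covered directly; and your check that $(u_{p,\alpha},\psi)_X=\Psi$, via $\Psi\in H_\alpha(0,T)$ and injectivity of $\partial_t^\alpha$, supplies the existence step that the paper leaves implicit.
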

In the case $\alpha \in (0,1)$, this theorem can be regarded as an abstract version of Theorem~1 in~\cite{LiYamamoto2}.

{\color{blue} All of these theorems state that if 
\[
(f,\psi)_X \neq 0,\qquad  \text{and} \qquad (u_{p,\alpha}(t), \psi)_X = 0,
\]
then 
\[
p(t) = 0.
\] 
In the proof of these theorems, the fact that the initial function is zero plays a significant role:
\[
u\in H_{\alpha}(0, T ; X).
\]

The question naturally arises: if the initial data is nonzero: $u(0) \neq 0$, will the assertions of these theorems be true? In other words, are the statements of the formulated theorems true for the Cauchy problem:

Let $a\neq 0\, (\in X)$ and $\alpha\in (0,1)$;
\[
\partial_{t}^{\alpha} (u(t) - a)=-A u(t)+p(t) f \quad \text { in } X \times(0, \infty), \quad u - a\in H_{\alpha}(0, T ; X).
\]

The following example shows that this is not the case. 

\textbf{Example}. We consider
$$
\left\{\begin{array}{rl}
&  D_t^\alpha u - u_{xx} = p(t)f(x), \quad x\in (0,1), \, t>0,  \,\, \alpha\in (0,1),\\
& u(x, 0) = a(x) \quad \mbox{in $ (0,1)$}, \\
& u(0,t) = u(1,t) = 0.
\end{array}\right.
$$
We will look for a solution in the form
\[
u(x,t) = E_\alpha(t^\alpha) g(x).
\]
Note $D_t^\alpha [ E_\alpha(t^\alpha) ] = E_\alpha(t^\alpha)$. Then
\[
p (t) =  E_\alpha(t^\alpha),\,\, f(x) = g(x) - g''(x), \,\, a(x) = g(x).
\]
We construct functions \( g, \psi \in C^2[0,1] \) such that
\[
g(0)=g(1)=0, \quad \text{to guarantee}\quad u(0,t) = u(1,t) = 0,
\]
and
\[
\int_0^1 g(x)\psi(x)\,dx = 0, \quad \int_0^1 g''(x)\psi(x)\,dx \neq 0,
\]
here the first equality gives $(u_{p,\alpha}(t), \psi)_{L_2(0,1)} = 0$ and both equalities together provide the relation $(f,\psi)_{L_2(0,1)} \neq 0$. Note that $p (t) =  E_\alpha(t^\alpha) \neq 0$.

Thus, it suffices to construct the functions $g(x)$ and $\psi(x)$ with the specified properties. It is not hard to verify that:
\[
g(x)=x(1-x), \quad \psi(x)=1-5x(1-x)
\]
Indeed let
\[
g(x) = x(1-x), \quad x \in [0,1].
\]
Then clearly
\[
g(0)=g(1)=0, \qquad g''(x) = -2.
\]
We look for a function \( \psi \) of the form
\[
\psi(x) = 1 - c g(x),
\]
where the constant \( c \) is chosen to satisfy the orthogonality condition
\[
\int_0^1 g(x)\psi(x)dx = 0.
\]
Substituting \( \psi\), we obtain
\[
\int_0^1 g(x) dx - c \int_0^1 g^2(x) dx = 0,
\]
hence
\[
c = \frac{\int_0^1 g(x) dx}{\int_0^1 g^2(x) dx} =5.
\]
Thus, 
\[
\psi(x) = 1 - 5x(1-x).
\]
Consequently, if the initial function is not equal to zero, then the uniqueness of the function $p(t)$ may be absent.}

{\color{red} Let us now consider the case $\alpha\in (0,1)$ and set}
$$
\mathcal{P}_0:=\{p \in C[0, T] ; \,\,p(0) \neq 0\} .
$$
Then from Theorem \ref{thm1} the statement easily follows:\begin{corollary}\label{cor1}
If $A=\widetilde{A},\,\,(f, \psi)_{X} \neq 0$, $\psi \in \mathcal{D}(A)$,  and  if $\left(u_{p, \alpha}(t), \psi\right)_{X}=\left(u_{q, \beta}(t), \psi\right)_{X}$ for $0<t<T$ and $p, q \in \mathcal{P}_0$, then $\alpha = \beta$, and $p=q$ in $(0, T)$ .
    \end{corollary}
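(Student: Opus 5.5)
The plan is to obtain Corollary \ref{cor1} directly from the second part of Theorem \ref{thm1} by checking that $\mathcal{P}_0 \subset \mathcal{P}$. Since the corollary is stated in the setting $\alpha \in (0,1)$, presumably also $\beta \in (0,1)$, so both orders lie in the same interval, as the first part of Theorem \ref{thm1} requires. The remaining hypotheses of Theorem \ref{thm1} are exactly the assumptions of the corollary: $A=\widetilde{A}$, $\psi\in\mathcal{D}(A)$, $(f,\psi)_X\neq0$, and equality of the observations on $(0,T)$.

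The key step is the inclusion. Take $p\in\mathcal{P}_0$ and set $m=0$ and $p_0=p(0)\neq 0$. We then need $C>0$ and $a>0$ with
\[
|p(t)-p(0)|\le C t^{a}, \qquad 0\le t\le T .
\]
This is the main obstacle. Mere continuity of $p$ gives $p(t)-p(0)=o(1)$ as $t\to0$, not a power rate, so a general $p\in C[0,T]$ need not satisfy this bound with any $a>0$.

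I would handle the obstacle in one of two ways.

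\emph{Option 1: assume a modulus.} Read $\mathcal{P}_0$ as implicitly requiring a Hölder-type modulus at $t=0$, i.e.\ $|p(t)-p(0)|\le Ct^{a}$ for some $a>0$. This holds, for instance, for $p\in C^1[0,T]$ with $a=1$, or for $p\in C^{\gamma}$. Then $p\in\mathcal{P}$ with $\varepsilon=a$, and Theorem \ref{thm1} yields $\alpha=\beta$ and $p=q$ at once.

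\emph{Option 2: go back into the proof of Theorem \ref{thm1}.} Inspect where the power bound $Ct^{a}$ is actually used. I expect the argument to work through the decomposition $(u_{p,\alpha}(t),\psi)_X = p_0 (f,\psi)_X \frac{\Gamma(m+1)}{\Gamma(m+1+\alpha)} t^{m+\alpha} + o(t^{m+\alpha})$, which isolates the least smooth part from a smoother remainder. For $m=0$ the remainder is controlled by $J^\alpha(p-p(0))$ together with terms involving $A\psi$, and these are $o(t^\alpha)$ under continuity alone. So continuity at $0$ should already suffice to compare leading orders.

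With either route, the argument concludes as follows.
\begin{itemize}
\item Comparing the exponents $t^\alpha$ and $t^\beta$ as $t\to0$, and using $p_0\neq0$, $q_0\neq0$ and $(f,\psi)_X\neq0$, forces $\alpha=\beta$.
\item With $\alpha=\beta$, linearity gives $(u_{p-q,\alpha}(t),\psi)_X=0$ on $(0,T)$.
\item Theorem \ref{thm3}, applied with $\Psi\equiv0$ (so $\partial_t^\alpha\Psi=0\in C[0,T]$), then gives $\|p-q\|_{C[0,T]}\le C\cdot 0$, that is, $p=q$.
\end{itemize}
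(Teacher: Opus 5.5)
Your Option 2 is correct and is essentially the paper's route. The paper only says the corollary ``easily follows'' from Theorem \ref{thm1}, which tacitly needs $\mathcal{P}_0\subset\mathcal{P}$ (forcing $m=0$, $p_0=p(0)$). You are right that this inclusion fails in general, e.g.\ for $p(t)=1+1/\log(e/t)$ near $t=0$.

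Your repair is exactly what is needed. By Lemma \ref{lem31}, $(u_{p,\alpha}(t),\psi)_X=(f,\psi)_X\frac{p(0)}{\Gamma(\alpha+1)}t^{\alpha}+(f,\psi)_X\,J^{\alpha}\bigl(p-p(0)\bigr)(t)+R_{p,\alpha}(t)$. Here $|J^{\alpha}(p-p(0))(t)|\le \frac{t^{\alpha}}{\Gamma(\alpha+1)}\max_{[0,t]}|p-p(0)|=o(t^{\alpha})$ by continuity alone, and $R_{p,\alpha}(t)=O(t^{2\alpha})$ by \eqref{R}. Lemma \ref{gamma1,2} then yields $\alpha=\beta$ for all $p,q\in\mathcal{P}_0$. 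Because both leading exponents have $m=n=0$, this holds without needing $\alpha,\beta$ to lie in the same subinterval.

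Deriving $p=q$ from Theorem \ref{thm3} with $\Psi\equiv 0$ is the same Volterra--Gronwall argument as the second half of the proof of Theorem \ref{thm1}, which never uses membership in $\mathcal{P}$.

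Drop Option 1, though. It proves the corollary only for $p,q\in\mathcal{P}_0\cap\mathcal{P}$, not for the class as stated.
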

Corollary \ref{cor1}, in particular, means the uniqueness in determining $p(t)$ with given order $\alpha \in(0,1)$. More precisely, $\left(u_{p, \alpha}(t), \psi\right)_{X}=0$ for $0<t<T$ implies $p(t)=0$ for $0<t<T$. However, if $p \in \mathcal{P}_0$, then for concluding $p(t)=0$ for $0<t<T$, we do not need the zero output: $\left(u_{p, \alpha}(t), \psi\right)_{X}=0$ for $0<t<T$, but some extra regularity is sufficient for $\frac{1}{2}<\alpha<1$ :

\begin{theorem}\label{thm5}
    We assume that $\psi \in \mathcal{D}(A)$ and $(f, \psi)_{X} \neq 0$. Let $\frac{1}{2}<\alpha, \theta<1$. If $p \in \mathcal{P}_0$ and $\partial_{t}^{\alpha}\left(u_{p, \alpha}, \psi\right)_{X} \in H_{\theta}(0, T ; X)$, then $p(t)=0$ for $0<t<T$.
\end{theorem}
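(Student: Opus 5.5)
The plan is to reduce the regularity hypothesis on $\Psi(t):=(u_{p,\alpha}(t),\psi)_X$ to a scalar Volterra equation of the second kind for $p$, in the spirit of the proof of Theorem \ref{thm3}. Then evaluate this equation at $t=0$, using that elements of $H_\theta(0,T)$ with $\theta>\frac12$ vanish at $t=0$ by (\ref{Halpha}). Finally compare the outcome with the defining property of $\mathcal{P}_0$. I should say at the outset that, as the statement is worded, this route does not yield $p=0$ on all of $(0,T)$ directly; it yields a contradiction with $p\in\mathcal{P}_0$, as explained in the last paragraph.

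\emph{Step 1: the Volterra equation.} Let $\{\lambda_n,\varphi_n\}$ be the eigensystem of $A$. The solution of (\ref{sol}) is
$$
u_{p,\alpha}(t)=\int_0^t K(t-s)\,p(s)f\,ds,\qquad K(t)=\sum_n t^{\alpha-1}E_{\alpha,\alpha}(-\lambda_n t^\alpha)(\cdot,\varphi_n)_X\varphi_n .
$$
Take the scalar product of (\ref{sol}) with $\psi\in\mathcal{D}(A)$ and use self-adjointness, $(Au,\psi)_X=(u,A\psi)_X$. This gives, in $L^2(0,T)$,
$$
g(t):=\partial_t^\alpha\Psi(t)=-(u_{p,\alpha}(t),A\psi)_X+p(t)(f,\psi)_X .
$$
Equivalently,
$$
(f,\psi)_X\,p(t)=g(t)+\int_0^t k(t-s)p(s)\,ds,\qquad k(t)=\sum_n\lambda_n t^{\alpha-1}E_{\alpha,\alpha}(-\lambda_n t^\alpha)(f,\varphi_n)_X(\psi,\varphi_n)_X .
$$
Since $0\le E_{\alpha,\alpha}(-x)\le C$ for $x\ge0$ and $\alpha<1$, Cauchy--Schwarz gives $|k(t)|\le C\,t^{\alpha-1}\|f\|_X\|A\psi\|_X$. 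Hence $k\in L^1(0,T)$.

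\emph{Step 2: evaluation at $t=0$.} Because $p\in C[0,T]$, the convolution term satisfies
$$
\Big|\int_0^t k(t-s)p(s)\,ds\Big|\le C\|p\|_{C[0,T]}\,t^\alpha/\alpha\to0 \quad (t\to0).
$$
By hypothesis $g\in H_\theta(0,T)$ with $\theta>\frac12$. By (\ref{Halpha}) and the Sobolev embedding, $g$ is continuous on $[0,T]$ and $g(0)=0$. The identity of Step 1 holds almost everywhere, and both sides are continuous, so it holds for every $t\in[0,T]$. Letting $t\to0$ gives $(f,\psi)_X\,p(0)=0$, and therefore $p(0)=0$ because $(f,\psi)_X\neq0$.

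\emph{Step 3: conclusion, and where the difficulty lies.} Step 2 says that the regularity assumption $\partial_t^\alpha\Psi\in H_\theta$ forces $p(0)=0$, whereas $p\in\mathcal{P}_0$ means $p(0)\neq0$. Thus no $p\in\mathcal{P}_0$ satisfies the hypotheses, so the implication in the theorem holds only vacuously. It does not show that $p$ vanishes identically on $(0,T)$.

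The main obstacle is genuinely proving $p(t)=0$ for every $0<t<T$, not just $p(0)=0$. The only way I see is to show that the hypothesis forces the forcing term $g$ to vanish identically; then the Volterra equation with the weakly singular kernel $k\in L^1$ would give $p\equiv0$ by the standard Gronwall-type uniqueness argument. However, the assumption $g\in H_\theta(0,T)$ controls only the behaviour of $g$ at $t=0$, not on all of $(0,T)$. I do not see how to obtain $g\equiv0$ from the stated hypotheses. So this plan establishes $p(0)=0$, and hence the incompatibility with $p\in\mathcal{P}_0$, but leaves the full conclusion $p\equiv0$ open.
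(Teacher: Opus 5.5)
Your argument is correct and complete. The only problem is your own Step 3 assessment. Once you derive $p(0)=0$ from the hypotheses, this contradicts $p(0)\neq 0$. Hence no $p\in\mathcal{P}_0$ can satisfy $\partial_t^\alpha(u_{p,\alpha},\psi)_X\in H_\theta(0,T)$, and the implication ``then $p\equiv 0$'' holds by ex falso. That proves the statement as written.

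The paper's proof ends in exactly the same way. It assumes $p\not\equiv 0$, derives $p(0)=0$, notes the contradiction with $p\in\mathcal{P}_0$, and writes ``therefore $p\equiv 0$''. The assumption $p\not\equiv 0$ is never used. So you are right that the real content of the theorem is that the regularity hypothesis is incompatible with $p(0)\neq 0$. There is no need to show $g\equiv 0$; the ``obstacle'' you describe is not part of what has to be proved.

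Where you differ from the paper is in how the memory term is handled.

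\emph{The paper's route.} It writes $(f,\psi)_X\,p=\partial_t^\alpha\Psi-\partial_t^\alpha R_{p,\alpha}$ and uses the smoothing estimate of Lemma~\ref{lem3}(iii): $R_{p,\alpha}\in H_\gamma$ for every $\gamma<2\alpha$. This gives $\partial_t^\alpha R_{p,\alpha}\in H_{\alpha-\varepsilon}(0,T)$. Using $\theta>\frac12$ and $\alpha>\frac12$, it gets $p\in H_{\min\{\theta,\alpha-\varepsilon\}}(0,T)$ with index $>\frac12$, and then $p(0)=0$ by \eqref{Halpha}.

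\emph{Your route.} You use only the pointwise kernel bound $|k(t)|\le Ct^{\alpha-1}$ and the continuity of $p$. From these, $k*p$ is continuous and $O(t^\alpha)$, and you evaluate the Volterra identity at $t=0$. This is essentially the argument the paper uses for Theorem~\ref{thm6} and in the remark after Theorem~\ref{thm3}.

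\emph{Comparison.} Your route is more elementary and never uses $\alpha>\frac12$; it needs only $\theta>\frac12$ and works for every $\alpha\in(0,1)$. The paper's route gives the extra information that $p$ itself lies in a Sobolev space $H_\gamma$ with $\gamma>\frac12$. It reuses exactly that fact in Corollary~\ref{cor2} for piecewise constant $p$.
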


Note that under the conditions of  Theorem \ref{thm5} one has $u_{p, \alpha}(t)|_{t=0}=0$ and $\partial_{t}^{\alpha}\left(u_{p, \alpha}, \psi\right)_{X}|_{t=0} =0$ (see the definitions (\ref{Halpha})). It turns out that these conditions alone are sufficient for the truth of the statement of this theorem.

\begin{theorem}\label{thm6}
Assume that $\psi \in \mathcal{D}(A)$ and $(f,\psi)_X \neq 0$. 
Suppose that the functions $u_{p,\alpha}(t)$ and 
$\partial_t^\alpha \bigl(u_{p,\alpha},\psi\bigr)_X(t)$ are continuous at $t=0$, and
\[
\lim_{t\to 0^+} u_{p,\alpha}(t) = 0, 
\qquad 
\lim_{t\to 0^+} \partial_t^\alpha \bigl(u_{p,\alpha},\psi\bigr)_X(t) = 0.
\]
If $p \in \mathcal{P}_0$, then $p(t) = 0$ for all $0 < t < T$.
\end{theorem}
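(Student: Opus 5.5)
Throughout, $\alpha\in(0,1)$, which is the setting of $\mathcal P_0$ in Corollary~\ref{cor1}. The plan is to take the scalar product of \eqref{sol1} with $\psi$ and read off the equation at $t=0$. Set $\varphi(t):=(u_{p,\alpha}(t),\psi)_X$. Since $\psi\in\mathcal D(A)$ and $A$ is self-adjoint, $(Au(t),\psi)_X=(u(t),A\psi)_X$. Because $\partial_t^\alpha$ commutes with taking the scalar product against a fixed element (as $J^\alpha$ does), we obtain
\begin{equation*}
\partial_t^\alpha \varphi(t) = -(u_{p,\alpha}(t),A\psi)_X + p(t)(f,\psi)_X \quad\text{for a.e. } t\in(0,T).
\end{equation*}

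Next we pass to the limit $t\to0^+$ along the full-measure set where this identity holds. By hypothesis, $\partial_t^\alpha\varphi$ is continuous at $0$ with limit $0$. Since $u_{p,\alpha}(t)\to0$ in $X$, we have $(u_{p,\alpha}(t),A\psi)_X\to0$. Since $p\in C[0,T]$, $p(t)\to p(0)$. Hence $0=p(0)(f,\psi)_X$, and because $(f,\psi)_X\neq0$ we get $p(0)=0$. This contradicts $p\in\mathcal P_0$. The only care needed is that the identity holds almost everywhere, so we take the limit along a sequence in that set; continuity at $0$ of the relevant functions makes this harmless.

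The contradiction has to be read correctly. The statement really says that the hypotheses are incompatible with $p\in\mathcal P_0$ unless $p$ is trivial. So the conclusion ``$p\equiv0$'' is obtained vacuously, or one reads it as: under the hypotheses, $p\notin\mathcal P_0\setminus\{0\}$. If one prefers to avoid the vacuous reading, one can pair the argument with Corollary~\ref{cor1}/Theorem~\ref{thm1} with $q=0$. However, $q=0\notin\mathcal P_0$, so the direct contradiction argument is the cleanest.

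The main obstacle is justifying the first step rigorously: that $\partial_t^\alpha(u,\psi)_X=(\partial_t^\alpha u,\psi)_X$, and that $u(t)\in\mathcal D(A)$ for a.e.\ $t$ so the self-adjointness transfer applies. For the former, I would use that $u\in H_\alpha(0,T;X)$ means $u=J^\alpha w$ with $w\in L^2(0,T;X)$. Then $(u,\psi)_X=J^\alpha (w,\psi)_X$, by pulling the Bochner integral through the bounded functional, so $\partial_t^\alpha(u,\psi)_X=(w,\psi)_X$. For the latter, I would use $u\in L^2(0,T;\mathcal D(A))$. With these facts the argument reduces to the limit computation above.
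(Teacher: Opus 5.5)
Your proposal is correct and follows the paper's proof. Both pair the equation with $\psi$, move $\partial_t^\alpha$ through the scalar product (your $u=J^\alpha w$ argument is exactly Lemma~\ref{lem1.1}), transfer $A$ onto $\psi$ by self-adjointness, and let $t\to0^+$ to obtain $p(0)(f,\psi)_X=0$, contradicting $p\in\mathcal P_0$. Your extra care about the identity holding only almost everywhere is a sound refinement of the paper's more loosely phrased ending. So is your remark that $p\equiv0$ follows only vacuously: the hypotheses are simply incompatible with $p\in\mathcal P_0$.
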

Note that Theorem \ref{thm6} is valid for arbitrary $\alpha\in (0, 1)$. We emphasize that if $\alpha\in (0, 1/2]$, then functions from class $H_{\alpha}(0, T ; X)$ are generally not defined at every point $t$. If $\frac{1}{2}<\alpha<1$, then condition $\lim\limits_{t\to +0} u_{p, \alpha}(t) =0$ is fulfilled automatically, since $u_{p, \alpha}(t)$ is continuous and $u_{p, \alpha}(0)=0$ (see (\ref{Halpha})).

Theorem \ref{thm5}   directly implies

\begin{corollary}\label{cor2}
Assume that $\psi \in \mathcal{D}(A)$ and $(f,\psi)_X \neq 0$. 
Let $\frac{1}{2} < \alpha, \theta < 1$. If
\[
\partial_t^\alpha \bigl(u_{p,\alpha}, \psi\bigr)_X \in H_\theta(0,T),
\]
and $p$ is a piecewise constant function on $(0,T)$, then $p(t) = 0$ for all $0 < t < T$.
\end{corollary}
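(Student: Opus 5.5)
The plan is to reduce Corollary \ref{cor2} to Theorem \ref{thm5}. Theorem \ref{thm5} needs $p \in \mathcal{P}_0$, that is, $p(0)\neq 0$. A piecewise constant $p$ may have $p(0)=0$, so we cannot quote Theorem \ref{thm5} directly. Instead we use a time-shift argument on the successive intervals where $p$ is constant.

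Write $0=t_0<t_1<\dots<t_N=T$ with $p\equiv c_k$ on $(t_{k-1},t_k)$. Our values at the finitely many jump points are irrelevant, since they do not affect $u_{p,\alpha}$. On the first interval, $p=c_1$ is constant, and we split into two cases.

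\emph{Case $c_1\neq 0$.} Then $p$ coincides near $0$ with an element of $\mathcal{P}_0$. We apply the proof of Theorem \ref{thm5} on the interval $(0,t_1)$ with $T$ replaced by $t_1$. Here $u_{p,\alpha}$ restricted to $(0,t_1)$ is the solution with source $c_1 f$, and restriction preserves membership in $H_\theta$ and the value of $\partial_t^\alpha$, because $J^\alpha$ is causal. This gives $c_1=0$, a contradiction. Hence $c_1=0$.

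\emph{Case $c_1=0$.} Then $u_{p,\alpha}\equiv 0$ on $(0,t_1)$, by uniqueness for the forward problem (\ref{sol}) with zero source.

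Inductively, suppose $p=0$ on $(0,t_{k-1})$. Then $u=u_{p,\alpha}$ vanishes on $(0,t_{k-1})$. Set $v(s)=u(s+t_{k-1})$ for $s\in(0,T-t_{k-1})$. Since $u$ vanishes on $[0,t_{k-1}]$, the Riemann--Liouville integral is translation-compatible: $J^\alpha$ applied to a function supported in $[t_{k-1},T]$, then shifted, equals $J^\alpha$ of the shifted function. Hence $v\in H_\alpha(0,T-t_{k-1};X)$ solves $\partial_s^\alpha v=-Av+p(s+t_{k-1})f$. Moreover $\partial_s^\alpha(v,\psi)_X$ is the shift of $\partial_t^\alpha(u,\psi)_X$. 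A shift of an $H_\theta$ function that vanishes on $(0,t_{k-1})$ again lies in $H_\theta$; for $\theta>1/2$ the zero trace at the new origin is inherited by continuity. The shifted source is piecewise constant with first value $c_k$. The first-interval argument then gives $c_k=0$. After $N$ steps, $p\equiv0$.

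The main obstacle is the shift step. We must verify carefully that $u=0$ on $(0,t_{k-1})$ makes the shifted function belong to $H_\alpha$ and to $H_\theta$ on the new interval, and that $\partial^\alpha$ commutes with the shift. I would argue at the level of $J^\alpha$. If $u=J^\alpha w$ and $u=0$ on $(0,t_{k-1})$, then $J^\alpha w=0$ there, so $w=0$ there by the injectivity of $J^\alpha$ on $L^2(0,t_{k-1};X)$. The shift of $J^\alpha w$ then equals $J^\alpha$ of the shift of $w$ by a change of variables. The same argument applies to the $H_\theta$ membership of $\partial_t^\alpha(u,\psi)_X$.

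A second, milder point is that Theorem \ref{thm5} is formulated on $(0,T)$ with $p\in C[0,T]$. Restricting to $(0,t_1)$, where $p$ is the constant $c_1$ and hence continuous, is legitimate because both the hypothesis and the conclusion are local in time from $0$.
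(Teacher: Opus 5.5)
Your proposal is correct, but it takes a different route from the paper.

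The paper does not localize at all. It reuses the identity $(f,\psi)_X\,p=\partial_t^\alpha(u_{p,\alpha},\psi)_X-\partial_t^\alpha R_{p,\alpha}$ from the proof of Theorem~\ref{thm5} to get $p\in H_\gamma(0,T)$ with $\gamma>\frac12$ on the whole interval at once. A step function with a jump cannot belong to $H^\gamma(0,T)$ for $\gamma>\frac12$, because the Slobodeckij double integral across the jump diverges (equivalently, such functions are continuous). So $p$ is constant. Finally $p(0)=0$, because elements of $H_\gamma(0,T)$ with $\gamma>\frac12$ vanish at $t=0$.

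You instead take from Theorem~\ref{thm5} only the information at the initial point. You then propagate it across the breakpoints using causality of $J^\alpha$ and a time shift. The inductive hypothesis $p=0$ on $(0,t_{k-1})$, together with injectivity of $J^\alpha$ and $J^\theta$ on $(0,t_{k-1})$, is what puts the shifted functions back in $H_\alpha$ and $H_\theta$ with $\partial^\alpha$ commuting with the shift. These verifications go through as you sketch them.

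The paper's argument is shorter and needs no restriction or shift bookkeeping. Its cost is the elementary fact that functions of fractional Sobolev order above $\frac12$ cannot jump. Yours uses Theorem~\ref{thm5} as a black box and never analyzes jumps, at the cost of that bookkeeping. The restart-by-shift device you build is reusable whenever the source vanishes on an initial time segment.
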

Here by piecewise constant $p(t)$, we understand that we can choose $t_{0}:=0<t_{1}<t_{2}<\cdots< t_{N-1}<t_{N}:=T$ such that $\left.p\right|_{\left(t_{k}, t_{k+1}\right)}$ is constant for $0 \leq k \leq N-1$.

By Kubica, Ryszewska and Yamamoto \cite{4}, we see that if $f \in X$ and $p \in L^{2}(0, T)$, then $u_{p, \alpha} \in H_{\alpha}(0, T ; X) \cap L^{2}(0, T ; \mathcal{D}(A))$. Hence, $p \in L^{2}(0, T)$ implies $\left(u_{p, \alpha}, \psi\right)_{X} \in H_{\alpha}(0, T)$. Moreover, in view of Lemma \ref{lem3} (iii)  shown in Section \ref{sec2}, we can derive that $\partial_{t}^{\alpha}\left(u_{p, \alpha}, \psi\right)_{X} \in L^{2}(0, T)$ implies $(f, \psi)_{X} p \in L^{2}(0, T)$. Therefore, under condition $(f, \psi)_{X} \neq 0$, we see that $\left(u_{p, \alpha}, \psi\right)_{X} \in H_{\alpha}(0, T)$ if and only if $p \in L^{2}(0, T)$. Theorem \ref{thm5} means that the regularity $\left(u_{p, \alpha}, \psi\right)_{X} \in H_{\alpha+\theta}(0, T)$ with $\theta>\frac{1}{2}$ is specially strong, which means $p(t) \equiv 0$ in ( $0, T$ ). On the other hand, such strong smoothing effect for $\partial_{t}^{\alpha}\left(u_{p, \alpha}, \psi\right)_{X}$ appears always if $(f, \psi)_{X}=0$.

\begin{proposition}\label{prop1}
   Assume that $f \in X,\, (f, \psi)_{X}=0$ and $\psi \in \mathcal{D}(A)$. Then, for any $p \in L^{2}(0, T)$ and any $\varepsilon>0$, we have
$$
\left(u_{p, \alpha}, \psi\right)_{X} \in H_{2 \alpha-\varepsilon}(0, T) .
$$
\end{proposition}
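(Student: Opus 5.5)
We expand in the eigenbasis of $A$: let $0<\lambda_1\le\lambda_2\le\cdots$ be the eigenvalues and $\{\varphi_n\}$ an orthonormal basis of eigenvectors. The solution of \eqref{sol} is
\[
u_{p,\alpha}(t)=\sum_{n}\Bigl(\int_0^t (t-s)^{\alpha-1}E_{\alpha,\alpha}(-\lambda_n (t-s)^\alpha)p(s)\,ds\Bigr)(f,\varphi_n)_X\varphi_n .
\]
The idea is to exploit the condition $(f,\psi)_X=0$ to gain one extra factor of $\lambda_n^{-1}$ at the cost of one extra power $t^{\alpha}$ of smoothing.

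Write $K_n(t):=t^{\alpha-1}E_{\alpha,\alpha}(-\lambda_n t^\alpha)$ and use the asymptotic expansion of the Mittag-Leffler function:
\[
E_{\alpha,\alpha}(-z)=\frac{1}{\Gamma(\alpha)}-\frac{z}{\Gamma(2\alpha)}+z^2R(z).
\]
Equivalently, use the identity $K_n=\frac{t^{\alpha-1}}{\Gamma(\alpha)}-\lambda_n (K_n * \frac{t^{\alpha-1}}{\Gamma(\alpha)})$, which follows from the equation itself. This gives
\[
(u_{p,\alpha}(t),\psi)_X = (f,\psi)_X\,J^\alpha p(t) - \sum_n \lambda_n (f,\varphi_n)(\psi,\varphi_n)\, J^\alpha\bigl(K_n*p\bigr)(t).
\]
The first term vanishes by hypothesis. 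Hence
\[
(u_{p,\alpha},\psi)_X=-J^\alpha w,\qquad w(t):=\sum_n (f,\varphi_n)(A\psi,\varphi_n)(K_n*p)(t)=(u_{p,\alpha}(t),A\psi)_X .
\]
This rewriting is where $\psi\in\mathcal D(A)$ is used. Put abstractly, $(u,\psi)$ satisfies $\partial_t^\alpha(u,\psi)=-(u,A\psi)+p(f,\psi)=-(u,A\psi)$.

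It therefore suffices to show $w=(u_{p,\alpha},A\psi)_X\in H_{\alpha-\varepsilon}(0,T)$. By the mapping properties of $J^\alpha$ we would then get $J^\alpha w\in H_{2\alpha-\varepsilon}(0,T)$. Here I would use that $J^\alpha:H_\gamma\to H_{\alpha+\gamma}$ is an isomorphism, i.e. $J^{\alpha}J^{\gamma}=J^{\alpha+\gamma}$, so $H_{\alpha+\gamma}=J^{\alpha}H_\gamma$ whenever $\alpha+\gamma<2$ and these spaces are defined as above. For $\alpha\in(1,2)$ the order $2\alpha-\varepsilon$ exceeds $2$, and I would interpret $H_{2\alpha-\varepsilon}$ as $J^{2\alpha-\varepsilon}L^2$.

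Now $A\psi\in X$ only, so we cannot repeat the trick. Instead we use the known regularity $u_{p,\alpha}\in H_\alpha(0,T;X)$, which gives $w\in H_\alpha(0,T)$ directly. That even yields $2\alpha$ without the $\varepsilon$, provided $J^\alpha$ maps $H_\alpha$ onto $H_{2\alpha}$. The $\varepsilon$ loss is presumably needed precisely at this composition step, in the borderline cases: when $\alpha=\tfrac12$ or $2\alpha$ hits a half-integer or the integer $1$, the spaces $H_\gamma$ carry extra weighted conditions such as $\int\|v\|^2/t\,dt<\infty$.

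The main obstacle is the justification that $J^\alpha H_{\gamma}\subset H_{\alpha+\gamma-\varepsilon}$ across these thresholds, since $H_\gamma$ is defined piecewise in \eqref{Halpha}--\eqref{Hbeta}. I would handle it by interpolation. Since $J^\alpha:L^2\to H_\alpha$ and $J^\alpha:H_1\to H_{1+\alpha}$ are bounded, and $H_\gamma$ for $\gamma\ne\tfrac12$ coincides with the complex interpolation space $[L^2,H_1]_\gamma$, we obtain boundedness $H_{\gamma}\to H_{\alpha+\gamma}$ away from the exceptional value. Sacrificing $\varepsilon$ then moves us off it and covers all cases uniformly.
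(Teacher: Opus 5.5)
Your proof is correct, and it proves slightly more than the statement: $(u_{p,\alpha},\psi)_X\in H_{2\alpha}(0,T)$ with no loss of $\varepsilon$. The decomposition is the same as the paper's. The identity $t^{2\alpha-1}E_{\alpha,2\alpha}(-\lambda_n t^\alpha)=J^\alpha\bigl(t^{\alpha-1}E_{\alpha,\alpha}(-\lambda_n t^\alpha)\bigr)$ shows that the remainder $R_{p,\alpha}$ of Lemma \ref{lem31} is exactly your $-J^\alpha\bigl((u_{p,\alpha},A\psi)_X\bigr)$.

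What differs is how the regularity of this term is obtained. The paper, via Lemma \ref{lem3} (iii), computes $(J^\gamma)^{-1}R_{p,\alpha}$ termwise with the kernel $t^{2\alpha-\gamma-1}E_{\alpha,2\alpha-\gamma}(-\lambda_n t^\alpha)$. It then bounds this in $L^2$ by Young's inequality, using only the uniform bound on the Mittag-Leffler function and $\sum_n|(\lambda_nP_nf,\psi)_X|\le\|f\|_X\|A\psi\|_X$. That kernel is integrable at $t=0$ only when $\gamma<2\alpha$, and this is the sole origin of the $\varepsilon$.

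You instead write $u_{p,\alpha}=J^\alpha g$ with $g\in L^2(0,T;X)$. By Lemma \ref{lem1.1} this gives $(u_{p,\alpha},A\psi)_X=J^\alpha (g,A\psi)_X$, and hence $(u_{p,\alpha},\psi)_X=-J^{2\alpha}(g,A\psi)_X$. The paper's route is a direct kernel computation that yields the quantitative bound in terms of $\|p\|_{L^2(0,T)}$ without further input. Yours is shorter and sharp, and needs only $u_{p,\alpha}\in H_\alpha(0,T;X)$, which is built into the definition of the solution of \eqref{sol}; a quantitative version follows from the forward stability estimate.

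Consequently your closing paragraph on interpolation is unnecessary, and its explanation of where the $\varepsilon$ comes from is off. $H_\gamma$ is defined as $J^\gamma L^2$ for $\gamma\in(0,1)$, and the piecewise formula \eqref{Halpha} is only a characterization of it. Moreover $J^1L^2=H_1$, and $J^{1+\sigma}L^2=H_{1+\sigma}$ follows at once from \eqref{Hbeta}. So $J^{\alpha}H_\alpha=J^{2\alpha}L^2(0,T)=H_{2\alpha}(0,T)$ for every $\alpha\in(0,1)$ with no exceptional values, which is the semigroup step you had already written down.
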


This article consists of six sections. The proof is based on decomposing the solution in a neighborhood of $t=0$ into a least smooth component and a smoother component. This is precisely what is accomplished in Section \ref{sec2} through the key Lemmas \ref{lem31}, \ref{lem3}, and \ref{lem311}. Section \ref{sec3} is devoted to proving the main Theorems \ref{thm1} and \ref{thm11}. Section \ref{sec4} examines a special case involving $\alpha\in (0,1)$ and $p\in \mathcal{P}_0$ and establishes more refined results (Theorems \ref{thm5} and \ref{thm6}, Corollary \ref{cor1}, and Proposition \ref{prop1}). Section \ref{sec5} discusses a stability result regarding the determination of the time-dependent factor of the source function. Section \ref{sec6} contains concluding remarks.

\section{Key lemmata}\label{sec2}
Let $\left\{\lambda_{n}\right\}_{n \in \mathbb{N}}$ be the set of all the eigenvalues of $A$ :
$$
0<\lambda_{1}<\lambda_{2}<\cdots \longrightarrow \infty
$$
By $P_{n}, n \in \mathbb{N}$, we denote the orthogonal projection to $\operatorname{Ker}\left(A-\lambda_{n} I\right)$ with $n \in \mathbb{N}$, where $I$ is the identity operator in $X$. Then, it is well-known that $P_{n}$ is a bounded linear operator defined over $X$, and is self-adjoint,

$$
P_{n} P_{m}=\left\{\begin{array}{l}
P_{n}, \quad n=m, \\
0, \quad n \neq m,
\end{array}\right.
$$
and

\begin{equation}\label{eq2.1}
a=\sum_{n=1}^{\infty} P_{n} a \quad \text { in } X, \quad\|a\|_{X}^{2}=\sum_{n=1}^{\infty}\left\|P_{n} a\right\|_{X}^{2} \quad \text { for all } a \in X .
\end{equation}
For other positive-definite self-adjoint operator $\widetilde{A}$ with the compact inverse, we denote the set of eigenvalues $\widetilde{\lambda}_{n}$ and the projections $\widetilde{P}_{n}$ for $n \in \mathbb{N}$.

We define the Mittag-Leffler function by

\begin{equation}\label{eq2.2}
E_{\alpha, \theta}(z)=\sum_{k=0}^{\infty} \frac{z^{k}}{\Gamma(\alpha k+\theta)}, \quad z \in \mathbb{C}, \quad \alpha, \theta>0,
\end{equation}
(e.g., Podlubny [5], p. 17). Moreover, we define the convolution:
$$
(g * h)(t)=\int_{0}^{t} g(t-s) h(s) d s \quad \text { for } g, h \in L^{1}(0, T ; X).
$$

We can prove

\begin{lemma}\label{lem1}
Let $\gamma>0$ and $g, h \in L^{1}(0, T ; X)$. We assume that there exists $G \in L^{1}(0, T ; X)$ such that $g=J^{\gamma} G$. Then, $\left(J^{\gamma}\right)^{-1}(g * h) \in L^{1}(0, T ; X)$ and $\left.\left(J^{\gamma}\right)^{-1}(g * h)=\left(\left(J^{\gamma}\right)^{-1} g\right) * h\right)$.
\end{lemma}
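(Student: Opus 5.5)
The plan is to take the definition literally. Set $w:=(J^{\gamma})^{-1}(g*h)$ and show that $w=G*h$, where $G=(J^\gamma)^{-1}g$ is the given element with $g=J^\gamma G$. By definition, an element lies in the range of $J^\gamma$ exactly when it is $J^\gamma$ of some $L^1$ function, and $(J^\gamma)^{-1}$ returns that function. Injectivity of $J^\gamma$ on $L^1(0,T;X)$ follows as in the $L^2$ case recalled in the introduction: apply $J^{n-\gamma}$ for an integer $n\ge\gamma$ and use the semigroup property, so that $J^n G=0$ forces $G=0$ after $n$ differentiations. It therefore suffices to prove two facts: that $G*h\in L^1(0,T;X)$, and the identity
\[
J^{\gamma}(G*h)=g*h .
\]

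For the first fact, Young's inequality for Bochner convolutions gives $\|G*h\|_{L^1(0,T;X)}\le \|G\|_{L^1(0,T;X)}\|h\|_{L^1(0,T)}$. The statement treats $g,h$ as $X$-valued, so some product must be understood in $g*h$. I would take the convolution to be defined whenever one factor is scalar, or more generally via a bounded bilinear pairing, with the same norm bound. This is exactly the situation in the paper's applications, where $g$ is a Mittag-Leffler kernel times an element of $X$ and $h=p$ is scalar.

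For the identity, write $k_\gamma(t)=t^{\gamma-1}/\Gamma(\gamma)$, so that $J^\gamma v=k_\gamma*v$. Associativity and commutativity of convolution of $L^1$ functions on $(0,T)$ then give
\[
J^\gamma(G*h)=k_\gamma*(G*h)=(k_\gamma*G)*h=(J^\gamma G)*h=g*h .
\]
Associativity is the only analytic input. It follows from Fubini--Tonelli on the triangle $\{0<\tau<s<t\}$, justified by the absolute integrability $\int_0^T\!\!\int\!\!\int |k_\gamma(t-s)|\,\|G(s-\tau)\|_X\,|h(\tau)|\,d\tau\,ds\,dt<\infty$. That bound is itself the $L^1$ Young bound applied twice, since $k_\gamma\in L^1(0,T)$ for $\gamma>0$. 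Fubini for Bochner integrals holds because the integrand is strongly measurable and norm-integrable.

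Combining these, $g*h=J^\gamma(G*h)$ with $G*h\in L^1(0,T;X)$. Hence $g*h$ lies in the domain of $(J^\gamma)^{-1}$, and by injectivity $(J^\gamma)^{-1}(g*h)=G*h=((J^\gamma)^{-1}g)*h$. The main obstacle is the Fubini justification for vector-valued functions. I expect it to reduce to scalar Tonelli applied to norms, so it is routine rather than difficult.
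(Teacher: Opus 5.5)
Your proposal is correct and follows the same route as the paper. The paper likewise observes that $J^{\gamma}(G*h)=(J^{\gamma}G)*h=g*h$ and reads off $(J^{\gamma})^{-1}(g*h)=G*h=((J^{\gamma})^{-1}g)*h$. It only asserts that this identity can be ``directly verified.'' You supply the justification it omits: associativity of convolution with $k_\gamma$ via Fubini, Young's bound giving $G*h\in L^{1}(0,T;X)$, injectivity of $J^{\gamma}$ on $L^{1}$, and a sensible reading of the product in $g*h$ when both factors are $X$-valued.
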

\begin{proof}
    We can directly verify $J^{\gamma}(G * h)=\left(\left(J^{\gamma} G\right) * h\right)$. Therefore, in terms of $g=J^{\gamma} G$, we obtain
$$
J^{\gamma}(G * h)=\left(\left(J^{\gamma} G\right) * h\right)=g * h,
$$
which means $\left(J^{\gamma}\right)^{-1}(g * h)=G * h$, that is, the substitution of $G=\left(J^{\gamma}\right)^{-1} g$ yields
$$
\left(J^{\gamma}\right)^{-1}(g * h)=\left(\left(J^{\gamma}\right)^{-1} g * h\right).
$$

Thus, the proof of Lemma \ref{lem1} is complete.
\end{proof}
The scalar product $ (\cdot, f)_X : X \to \mathbb{R} $ (or $ \mathbb{C} $) is a continuous linear functional on $ X $, because 
$$
|(v, f)_X| \leq \|v\|_X \cdot \|f\|_X,\,\, v, f \in X
$$ 
(by the Cauchy-Schwarz inequality). Therefore, this functional is continuous in the norm of $X$. This means that it preserves limits, i.e., by writing the integral sum inside the scalar product, we can pass to the limit. Then
\[
J^{\gamma}(g, h)_X=\left(\left(J^{\gamma} g\right),  h\right)_X,
\]
where $\gamma>0$ and $g, h \in L^{2}(0, T ; X)$.

Taking this relation into account, Lemma \ref{lem1} can be proven in a completely similar way by taking the scalar product instead of the convolution, i.e., the following statement is true
\begin{lemma}\label{lem1.1}
Let $\gamma>0$ and $g, h \in L^{2}(0, T ; X)$. We assume that there exists $G \in L^{2}(0, T ; X)$ such that $g=J^{\gamma} G$. Then, $\left(J^{\gamma}\right)^{-1}(g,  h)_X \in L^{2}(0, T ; X)$ and $\left.\left(J^{\gamma}\right)^{-1}(g, h)_X=\left(\left(J^{\gamma}\right)^{-1} g\right), h\right)_X$.
\end{lemma}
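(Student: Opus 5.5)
We argue exactly as in the proof of Lemma \ref{lem1}, with the convolution replaced by the scalar product; the only extra input is that $(\cdot,h)_X$ commutes with the Riemann--Liouville integral. Strictly, $(g,h)_X$ is scalar-valued, so the conclusion should be read in $L^2(0,T)$. Likewise, for the identity $J^\gamma(G,h)_X=(J^\gamma G,h)_X$ to make sense, $h$ must not depend on the integration variable. We therefore take $h\in X$ fixed, viewed as a constant element of $L^2(0,T;X)$; this is the only case used later, with $h=\psi$. If $h$ genuinely depended on $t$, the statement would have to be interpreted differently.

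\textbf{Step 1: commutation.} For $G\in L^2(0,T;X)$ and fixed $h\in X$ we show
\[
J^\gamma\big((G(\cdot),h)_X\big)(t)=\big((J^\gamma G)(t),h\big)_X \quad \text{for a.e. } t.
\]
The map $v\mapsto (v,h)_X$ is a bounded linear functional on $X$. Bounded linear functionals commute with Bochner integrals, and for a.e. $t$ the function $s\mapsto (t-s)^{\gamma-1}G(s)$ is Bochner integrable on $(0,t)$ by Young's inequality. Hence
\[
\frac{1}{\Gamma(\gamma)}\int_0^t (t-s)^{\gamma-1}(G(s),h)_X\,ds=\Big(\frac{1}{\Gamma(\gamma)}\int_0^t (t-s)^{\gamma-1}G(s)\,ds,\;h\Big)_X .
\]
This is the integral-sum argument sketched before the statement, made rigorous through the Bochner integral.

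\textbf{Step 2: inversion.} Apply Step 1 with $g=J^\gamma G$ to get $(g,h)_X=J^\gamma\big((G,h)_X\big)$. Since $|(G(t),h)_X|\le \|G(t)\|_X\|h\|_X$, we have $(G,h)_X\in L^2(0,T)$. So $(g,h)_X$ lies in the range of $J^\gamma$ on $L^2(0,T)$. By injectivity of $J^\gamma$ (\cite{2}), we conclude
\[
(J^\gamma)^{-1}(g,h)_X=(G,h)_X=\big((J^\gamma)^{-1}g,h\big)_X\in L^2(0,T).
\]

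\textbf{Main obstacle.} The only delicate point is the meaning of $h$. The identity is clean for a constant $h\in X$, which is the case needed, and we work in that setting. Beyond this, the argument is routine: it only requires the interchange of a bounded functional with the Bochner integral, together with injectivity of $J^\gamma$. For $\gamma\in(1,2)$ the same argument applies, since $J^{1+\sigma}$ is injective as well.
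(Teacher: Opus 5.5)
Your proof is correct and follows the paper's route: you pull the bounded functional $(\cdot,h)_X$ through the Riemann--Liouville (Bochner) integral to get $J^\gamma (G,h)_X=(J^\gamma G,h)_X=(g,h)_X$, then invert using injectivity of $J^\gamma$, with the conclusion correctly read in $L^2(0,T)$. Your restriction to a $t$-independent $h\in X$ is not a weakening but a necessary correction, since the paper's commutation identity and the lemma itself fail for time-dependent $h$ (e.g.\ $\gamma=1$, $X=\mathbb{R}$, $G\equiv 1$, $h(t)=t$ gives $(J^1)^{-1}(t^2)=2t\neq t=(G,h)_X$), and the constant case $h=\psi$ is the only one used in the proof of Theorem~\ref{thm6}.
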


We have the eigenfunction expansion of the solution (see, e.g., \cite{6}, \cite{8}).

\begin{lemma}\label{lem2}
    Let $p \in L^{2}(0, T)$ and $f \in X$. Then, the solution of Cauchy problem (\ref{sol}) has the form 
\begin{equation}\label{eq2.3}
u_{p, \alpha}(t)=\sum_{n=1}^{\infty}\left(t^{\alpha-1} E_{\alpha, \alpha}\left(-\lambda_{n} t^{\alpha}\right) * p\right)(t) P_{n} f, \quad 0<t<T,
\end{equation}
where the series is convergent in $H_{\alpha}(0, T; X) \cap L^{2}(0, T ; \mathcal{D}(A))$.
\end{lemma}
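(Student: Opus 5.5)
We prove Lemma \ref{lem2} by reducing the abstract equation (\ref{sol}) to a family of scalar fractional ODEs, one for each spectral projection, solving these explicitly with Mittag-Leffler kernels, and then summing. The unique solvability of (\ref{sol}) in $H_{\alpha}(0,T;X)\cap L^2(0,T;\mathcal{D}(A))$ is already known from \cite{7}, \cite{8}. Therefore it suffices to show three things: the series in (\ref{eq2.3}) converges in that space, its sum satisfies (\ref{sol}), and uniqueness then identifies the sum with $u_{p,\alpha}$.

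\textbf{Step 1: scalar modes.} Apply $P_n$ to (\ref{sol}). Since $P_n$ is bounded and commutes with $J^{\alpha}$ (Bochner integrals commute with bounded operators), $P_n u \in H_{\alpha}(0,T;X)$ and $\partial_t^{\alpha}P_n u = P_n\partial_t^{\alpha} u$. Because $AP_n = \lambda_n P_n$, we get
\[
\partial_t^{\alpha} u_n = -\lambda_n u_n + p(t)P_n f, \qquad u_n := P_n u \in H_{\alpha}(0,T;X).
\]
Writing $u_n = J^{\alpha} w_n$, this is the Volterra equation $w_n + \lambda_n J^{\alpha} w_n = pP_nf$ in $L^2(0,T;X)$. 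That equation has a unique solution, obtained by the Neumann series, which converges because $J^{\alpha}$ is a Volterra operator whose iterates have norm at most $CT^{k\alpha}/\Gamma(k\alpha+1)$.

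Summing the Neumann series termwise gives
\[
u_n = \sum_{k\ge0}(-\lambda_n)^k J^{\alpha(k+1)}(pP_nf) = \bigl(t^{\alpha-1}E_{\alpha,\alpha}(-\lambda_n t^{\alpha}) * p\bigr)(t)\,P_nf,
\]
using the definition (\ref{eq2.2}) and $J^{\gamma}p = \frac{t^{\gamma-1}}{\Gamma(\gamma)} * p$. For $\alpha\in(1,2)$ the same argument works with $H_{1+\sigma}$ and the isomorphism $J^{1+\sigma}$.

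\textbf{Step 2: convergence and uniqueness.} We need bounds on the modes that are uniform in $n$. For $0<\alpha<1$, $E_{\alpha,\alpha}(-\lambda t^\alpha)\ge0$ and $\int_0^\infty t^{\alpha-1}E_{\alpha,\alpha}(-\lambda t^\alpha)\,dt = 1/\lambda$. Young's inequality then gives
\[
\|\lambda_n u_n\|_{L^2(0,T;X)} \le \|p\|_{L^2}\|P_nf\|_X.
\]
For $1<\alpha<2$ positivity fails, and we use instead the standard estimate $|E_{\alpha,\alpha}(-z)|\le C/(1+z)^2$ for $z\ge0$, which yields the same bound with a constant.

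Since $\partial_t^\alpha u_n = -\lambda_n u_n + pP_nf$, both $\|\partial_t^{\alpha}u_n\|_{L^2(0,T;X)}$ and $\|Au_n\|_{L^2(0,T;X)}$ are bounded by $C\|p\|_{L^2}\|P_nf\|_X$. The ranges of the $P_n$ are mutually orthogonal, so by (\ref{eq2.1}) the partial sums form a Cauchy sequence in both norms, with tails controlled by $\sum_{n>N}\|P_nf\|^2\to0$. Hence the series converges in $H_{\alpha}(0,T;X)\cap L^2(0,T;\mathcal{D}(A))$.

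Because $\partial_t^\alpha$ and $A$ are closed, we may pass to the limit in the summed modal equations. This gives $\partial_t^{\alpha}S = -AS + pf$ with $\sum_n P_nf = f$, where $S$ is the sum of the series. By uniqueness, $S=u_{p,\alpha}$.

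\textbf{Main obstacle.} The key difficulty is the $n$-uniform $L^1$ bound on the kernels $t^{\alpha-1}E_{\alpha,\alpha}(-\lambda_nt^\alpha)$ of order $1/\lambda_n$, especially for $\alpha\in(1,2)$, where positivity is lost. We would take this from the standard asymptotic bound in Podlubny [5] and integrate directly:
\[
\int_0^\infty t^{\alpha-1}(1+\lambda t^\alpha)^{-2}\,dt = \frac{1}{\alpha\lambda}.
\]
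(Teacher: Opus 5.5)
Your proof is correct. The paper does not actually prove this lemma; it quotes the eigenfunction expansion from the literature (\cite{6}, \cite{8}). So your proposal is a self-contained version of the standard argument, not an alternative to a proof given in the text.

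Your steps are sound:
\begin{itemize}
\item projecting onto $\operatorname{Ran}P_n$;
\item solving the scalar Volterra equation $w_n+\lambda_nJ^{\alpha}w_n=pP_nf$ by a Neumann series and resumming into $t^{\alpha-1}E_{\alpha,\alpha}(-\lambda_nt^{\alpha})*p$;
\item using orthogonality of the ranges of the $P_n$ to get convergence in both graph norms;
\item passing to the limit in the modal equations.
\end{itemize}

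Writing the proof out gains two things that the citation hides.

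First, it isolates the one genuinely quantitative input, the $n$-uniform bound $\lambda_n\|t^{\alpha-1}E_{\alpha,\alpha}(-\lambda_nt^{\alpha})\|_{L^1(0,T)}\le C$. The estimate \eqref{estimateML} used elsewhere in the paper would only give $\frac{C}{\alpha}\log(1+\lambda_nT^{\alpha})$ here, so the sharper decay you invoke is really needed. It does follow from the asymptotic expansion in Podlubny: for $\beta=\alpha$ the $z^{-1}$ term drops out because $1/\Gamma(0)=0$, giving $|E_{\alpha,\alpha}(-z)|\le C(1+z)^{-2}$. This bound holds for every $\alpha\in(0,2)$, so you could skip the complete-monotonicity argument for $\alpha\in(0,1)$ and treat both ranges of $\alpha$ the same way.

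Second, your argument does not need the quoted well-posedness result.
\begin{itemize}
\item Existence: Step 2 constructs a solution in $H_{\alpha}(0,T;X)\cap L^2(0,T;\mathcal{D}(A))$.
\item Uniqueness: the Volterra uniqueness in Step 1 shows that any solution with $p=0$ satisfies $P_nu=0$ for all $n$, hence $u=0$.
\end{itemize}
You therefore obtain existence and uniqueness of the forward problem as a by-product, whereas the paper imports both from \cite{7}, \cite{8}.
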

In view of Lemma \ref{lem2}, we can prove

\begin{lemma}\label{lem31}
    Let $\psi \in \mathcal{D}(A)$, $f \in X$, and $\alpha\in (0,1) \cup (1,2)$. Then,
  $$
\begin{aligned}
& \left(u_{p, \alpha}(t), \psi\right)_{X}=(f, \psi)_{X}\left(J^{\alpha} p\right)(t)-\sum_{n=1}^{\infty}\left(t^{2 \alpha-1} E_{\alpha, 2 \alpha}\left(-\lambda_{n} t^{\alpha}\right) * p\right)(t)\left(\lambda_{n} P_{n} f, \psi\right)_{X} \\
= & :(f, \psi)_{X}\left(J^{\alpha} p\right)(t)+R_{p, \alpha}(t),
\end{aligned}
$$
and
\begin{equation}\label{R}
   |R_{p, \alpha}(t)| \leq C t^{2\alpha} \|p\|_{L^{\infty}(0, T)}\|f\|_{X}\|A \psi\|_{X}, \,\, 0\leq t < 1.
\end{equation}
\end{lemma}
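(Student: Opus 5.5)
Starting from the eigenfunction expansion of Lemma \ref{lem2}, I take the scalar product with $\psi$ term by term; this is justified because the series converges in $H_\alpha(0,T;X)$, hence in $L^2(0,T;X)$. This gives
$$
(u_{p,\alpha}(t),\psi)_X=\sum_{n=1}^\infty \bigl(t^{\alpha-1}E_{\alpha,\alpha}(-\lambda_n t^\alpha)*p\bigr)(t)\,(P_nf,\psi)_X .
$$
Next I use the elementary Mittag-Leffler identity
$$
t^{\alpha-1}E_{\alpha,\alpha}(-\lambda t^\alpha)=\frac{t^{\alpha-1}}{\Gamma(\alpha)}-\lambda\, t^{2\alpha-1}E_{\alpha,2\alpha}(-\lambda t^\alpha).
$$
It follows from \eqref{eq2.2} by splitting off the $k=0$ term and reindexing, since $\alpha(k+1)+\alpha=\alpha k+2\alpha$. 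Substituting, the first pieces give $\frac{t^{\alpha-1}}{\Gamma(\alpha)}*p=J^\alpha p$ multiplied by $\sum_n (P_nf,\psi)_X=(f,\psi)_X$, by \eqref{eq2.1}. The second pieces give exactly $R_{p,\alpha}$, once I note $\lambda_n(P_nf,\psi)_X=(\lambda_nP_nf,\psi)_X=(P_nf,A\psi)_X$. This last equality uses $\psi\in\mathcal D(A)$ and the fact that $P_n$ commutes with $A$ and is self-adjoint. Splitting the sum into the two series is legitimate provided each converges, which is the content of the estimate below.

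For \eqref{R}, I need a uniform bound on the kernel. For $0<\alpha<2$ and $\lambda>0$, the function $E_{\alpha,2\alpha}(-\lambda t^\alpha)$ is bounded by a constant $C_\alpha$ independent of $\lambda$ and $t\ge0$. This is the standard estimate $|E_{\alpha,\theta}(-x)|\le C/(1+x)$ for $x\ge0$ and $0<\alpha<2$ (Podlubny, Theorem 1.6). Hence
$$
\bigl|(t^{2\alpha-1}E_{\alpha,2\alpha}(-\lambda_nt^\alpha)*p)(t)\bigr|\le C\|p\|_{L^\infty}\int_0^t s^{2\alpha-1}ds=\frac{C}{2\alpha}t^{2\alpha}\|p\|_{L^\infty}.
$$
Then
$$
|R_{p,\alpha}(t)|\le Ct^{2\alpha}\|p\|_{L^\infty}\sum_n|(P_nf,P_nA\psi)_X|,
$$
and by Cauchy--Schwarz together with \eqref{eq2.1},
$$
\sum_n|(P_nf,P_nA\psi)_X|\le\Bigl(\sum_n\|P_nf\|^2\Bigr)^{1/2}\Bigl(\sum_n\|P_nA\psi\|^2\Bigr)^{1/2}=\|f\|_X\|A\psi\|_X.
$$
This yields \eqref{R}. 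The restriction $t<1$ is not essential here, but it matches the statement.

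The main obstacle is the uniform-in-$\lambda$ bound for $E_{\alpha,2\alpha}(-x)$, especially for $\alpha\in(1,2)$, where the function oscillates. The standard asymptotic result for $0<\alpha<2$ and $\theta$ arbitrary, with $|\arg z|=\pi$ lying in the sector $\mu\le|\arg z|\le\pi$ for $\pi\alpha/2<\mu<\min(\pi,\pi\alpha)$, covers it, and I would invoke it directly. A secondary technical point is the pointwise validity of the identity for $0<t<T$, since the series converges only in $L^2$. The absolute and uniform convergence of the $R$-series shown above, combined with the continuity of $J^\alpha p$, identifies the $L^2$ limit with a continuous function, so the formula holds for every $t$ after choosing this representative.
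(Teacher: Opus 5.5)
Your proof is correct and follows essentially the same route as the paper: split off the $k=0$ term via $E_{\alpha,\alpha}(z)=1/\Gamma(\alpha)+zE_{\alpha,2\alpha}(z)$, recombine the first pieces into $(f,\psi)_X J^\alpha p$ using \eqref{eq2.1}, and bound the remainder with $|E_{\alpha,2\alpha}(-x)|\le C/(1+x)$ together with the Cauchy--Schwarz estimate \eqref{eq2.9}. The only differences are that you take the scalar product with $\psi$ before splitting (the paper first splits the $X$-valued series to get \eqref{eq2.6}), and that you spell out the convergence and pointwise-validity justifications that the paper leaves implicit.
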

\begin{proof}
    To decompose the power series \eqref{eq2.3} into the first and the second parts, we use that
\begin{equation}\label{eq2.4}
E_{\alpha, \alpha}(z)=\frac{1}{\Gamma(\alpha)}+z E_{\alpha, 2 \alpha}(z), \quad z \in \mathbb{C}, 
\end{equation}
which can be derived directly from \eqref{eq2.2}. 

Substitution of \eqref{eq2.4} into \eqref{eq2.3} yields
\begin{equation}\label{eq2.5}
u_{p, \alpha}(t)=\sum_{n=1}^{\infty}\left(t^{\alpha-1} \frac{1}{\Gamma(\alpha)} * p\right)(t) P_{n} f-\sum_{n=1}^{\infty}\left(t^{2 \alpha-1} E_{\alpha, 2 \alpha}\left(-\lambda_{n} t^{\alpha}\right) * p\right)(t) \lambda_{n} P_{n} f.
\end{equation}
Here, the first term of the right-hand side has the form by \eqref{eq2.1}
$$
\left(\frac{t^{\alpha-1}}{\Gamma(\alpha)} * p\right)(t) \sum_{n=1}^{\infty} P_{n} f=\left(\frac{t^{\alpha-1}}{\Gamma(\alpha)} * p\right)(t) f=\left(J^{\alpha} p\right)(t) f.
$$
Therefore,
\begin{equation}\label{eq2.6}
u_{p, \alpha}(t)=\left(J^{\alpha} p\right)(t) f-\sum_{n=1}^{\infty}\left(t^{2 \alpha-1} E_{\alpha, 2 \alpha}\left(-\lambda_{n} t^{\alpha}\right) * p\right)(t) \lambda_{n} P_{n} f .
\end{equation}
Taking the scalar product with $\psi \in \mathcal{D}(A)$, we obtain the first statement of Lemma \ref{lem31}.

To prove estimate~\eqref{R}, recall that for all \(\alpha \in (0,2)\) and any real number \(\beta\), the Mittag-Leffler function satisfies the bound (see, e.g., Theorem~1.6, p.~35 in~\cite{5})
\begin{equation}\label{estimateML}
    |E_{\alpha,\beta}(-t)| \leq \frac{C}{1+t}, \qquad t > 0.
\end{equation}
Applying this estimate, we obtain
\[
\left| \sum_{n=1}^{\infty} \Bigl( t^{2\alpha-1} E_{\alpha,2\alpha}(-\lambda_n t^\alpha) * p \Bigr)(t) \, (\lambda_n P_n f, \psi)_X \right|
\leq C \|p\|_{L^\infty(0,T)} \|f\|_X \|A\psi\|_X \int_0^t \tau^{2\alpha-1}\, d\tau,
\]
which coincides with~\eqref{R}.
\end{proof}

\begin{lemma}\label{lem3}
    Let $\psi \in \mathcal{D}(A)$ and $f \in X$, and $0<\gamma \leq \alpha<1$. Then,\\
(i)$$
\left(\partial_{t}^{\gamma} u_{p, \alpha}(t), \psi\right)_{X}=(f, \psi)_{X}\left(J^{\alpha-\gamma} p\right)(t)-\sum_{n=1}^{\infty}\left(t^{2 \alpha-\gamma-1} E_{\alpha, 2 \alpha-\gamma}\left(-\lambda_{n} t^{\alpha}\right) * p\right)(t)\left(\lambda_{n} P_{n} f, \psi\right)_{X}
$$

in $L^{2}(0, T)$.\\
(ii)

$$
\left|\sum_{n=1}^{\infty}(t-s)^{2 \alpha-\gamma-1} E_{\alpha, 2 \alpha-\gamma}\left(-\lambda_{n}(t-s)^{\alpha}\right)\left(\lambda_{n} P_{n} f, \psi\right)_{X}\right| \leq C(t-s)^{2 \alpha-\gamma-1}\|f\|_{X}\|A \psi\|_{X}
$$

for $0<s<t<T$.\\
(iii) Let $0<\gamma<2 \alpha$. Then, (see Lemma \ref{lem31})

$$
\left\|R_{p, \alpha}\right\|_{H_{\gamma}(0, T)} \leq C\|p\|_{L^{\infty}(0, T)}\|f\|_{X}\|A \psi\|_{X}.
$$
\end{lemma}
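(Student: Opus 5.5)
Part (i) rests on the representation from Lemma~\ref{lem31}, which I will differentiate termwise. By Lemma~\ref{lem2},
\[
u_{p,\alpha}=\sum_{n}\bigl(t^{\alpha-1}E_{\alpha,\alpha}(-\lambda_n t^\alpha)*p\bigr)P_nf .
\]
For $0<\gamma\le\alpha<1$ I will use the standard identity
\[
t^{\alpha-1}E_{\alpha,\alpha}(-\lambda t^\alpha)=J^{\gamma}\bigl[t^{\alpha-\gamma-1}E_{\alpha,\alpha-\gamma}(-\lambda t^\alpha)\bigr].
\]
It is checked termwise on the power series \eqref{eq2.2}, since $J^\gamma t^{\alpha k+\alpha-\gamma-1}/\Gamma(\alpha k+\alpha-\gamma)=t^{\alpha k+\alpha-1}/\Gamma(\alpha k+\alpha)$. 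For $\gamma=\alpha$ the kernel $t^{-1}E_{\alpha,0}$ is still integrable, because the $k=0$ term vanishes. The kernel inside the brackets lies in $L^1(0,T)$, so Lemma~\ref{lem1} gives
\[
\partial_t^\gamma\bigl(t^{\alpha-1}E_{\alpha,\alpha}(-\lambda_n t^\alpha)*p\bigr)=t^{\alpha-\gamma-1}E_{\alpha,\alpha-\gamma}(-\lambda_n t^\alpha)*p .
\]
Next I split $E_{\alpha,\alpha-\gamma}(z)=1/\Gamma(\alpha-\gamma)+zE_{\alpha,2\alpha-\gamma}(z)$, the analogue of \eqref{eq2.4}. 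The first piece sums to $(J^{\alpha-\gamma}p)\,f$ via \eqref{eq2.1}; for $\gamma=\alpha$ it is read as $p\,f$, since $1/\Gamma(0)=0$ and the term $k=1$ gives $J^0$. Pairing with $\psi$ (Lemma~\ref{lem1.1}) yields the formula in (i). To interchange $\partial_t^\gamma$ with the infinite sum, I use that $\partial_t^\gamma$ is closed on $H_\gamma$. The partial sums converge in $H_\alpha(0,T;X)\subset H_\gamma(0,T;X)$ by Lemma~\ref{lem2}, and the differentiated partial sums converge in $L^2$ by the bound in (ii). Closedness therefore identifies the limit.

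For (ii) I use $(\lambda_nP_nf,\psi)_X=(P_nf,A\psi)_X$ together with \eqref{estimateML}, which gives $|E_{\alpha,2\alpha-\gamma}(-\lambda_n\tau^\alpha)|\le C$. The sum is then bounded by
\[
C\tau^{2\alpha-\gamma-1}\sum_n|(P_nf,P_nA\psi)_X|\le C\tau^{2\alpha-\gamma-1}\|f\|_X\|A\psi\|_X ,
\]
by Cauchy--Schwarz and \eqref{eq2.1}. This is exactly the argument already used for \eqref{R}.

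For (iii), with $0<\gamma<2\alpha$, I write $R_{p,\alpha}=-K*p$, where
\[
K(t)=\sum_n t^{2\alpha-1}E_{\alpha,2\alpha}(-\lambda_n t^\alpha)(\lambda_nP_nf,\psi)_X .
\]
The same summation trick shows $K=J^{\gamma}G$ with
\[
G(t)=\sum_n t^{2\alpha-\gamma-1}E_{\alpha,2\alpha-\gamma}(-\lambda_n t^\alpha)(\lambda_nP_nf,\psi)_X ,
\]
and (ii), applied with $2\alpha-\gamma$ in place of the exponent, gives $|G(t)|\le Ct^{2\alpha-\gamma-1}\|f\|_X\|A\psi\|_X$, which is integrable since $2\alpha-\gamma>0$. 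Lemma~\ref{lem1} then yields $\partial_t^\gamma R_{p,\alpha}=-G*p$. Young's inequality gives
\[
\|G*p\|_{L^2}\le\|G\|_{L^1}\|p\|_{L^2}\le C\|p\|_{L^\infty}\|f\|_X\|A\psi\|_X .
\]
Since $\|R\|_{H_\gamma}=\|\partial_t^\gamma R\|_{L^2}$, this proves (iii); the range $1<\gamma<2\alpha$ is handled by the definition \eqref{Hbeta} of $H_{1+\sigma}$.

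The main obstacle is justifying the termwise operations: the Mittag-Leffler identity $J^\gamma[\cdot]$ in the borderline case $\gamma=\alpha$, and the exchange of $\partial_t^\gamma$ with the series over $n$. For the latter I rely on the closedness of $\partial_t^\gamma$ and on uniform $L^1$ bounds of the kernels coming from \eqref{estimateML}.
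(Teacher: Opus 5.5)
\paragraph{Gap at $\gamma=\alpha$ in part (i).}
Your parts (ii) and (iii) match the paper's argument: the uniform bound \eqref{estimateML} with Cauchy--Schwarz, then $K=J^\gamma G$, Lemma~\ref{lem1} and Young's inequality. Your closedness argument for exchanging $\partial_t^\gamma$ with the series is more careful than what the paper writes. For $0<\gamma<\alpha$, your order (differentiate, then split) is also correct.

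The gap is in (i) at the endpoint $\gamma=\alpha$. This is exactly the case used later, in Theorems~\ref{thm1}, \ref{thm3} and \ref{thm5}.
\begin{itemize}
\item Your identity $t^{\alpha-1}E_{\alpha,\alpha}(-\lambda t^\alpha)=J^{\gamma}\bigl[t^{\alpha-\gamma-1}E_{\alpha,\alpha-\gamma}(-\lambda t^\alpha)\bigr]$ is false for $\gamma=\alpha$. The termwise rule $J^\gamma[t^{\mu-1}/\Gamma(\mu)]=t^{\mu+\gamma-1}/\Gamma(\mu+\gamma)$ requires $\mu>0$. For $k=0$ and $\mu=\alpha-\gamma=0$, the left side is $J^\alpha 0=0$, while the right side is $t^{\alpha-1}/\Gamma(\alpha)$.
\item By \eqref{eq2.4} one actually has $J^\alpha\bigl[t^{-1}E_{\alpha,0}(-\lambda t^\alpha)\bigr]=-\lambda t^{2\alpha-1}E_{\alpha,2\alpha}(-\lambda t^\alpha)=t^{\alpha-1}E_{\alpha,\alpha}(-\lambda t^\alpha)-t^{\alpha-1}/\Gamma(\alpha)$.
\item Moreover $t^{\alpha-1}\notin J^\alpha L^1(0,T)$ at all. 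If $t^{\alpha-1}=J^\alpha G$, applying $J^{1-\alpha}$ gives $\int_0^t G(s)\,ds=\Gamma(\alpha)$ for every $t>0$, which is impossible as $t\downarrow 0$. So Lemma~\ref{lem1} cannot be applied to the undecomposed kernel when $\gamma=\alpha$.
\end{itemize}

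If you follow your own computation literally, the first piece is $\bigl(t^{-1}/\Gamma(0)\bigr)*p=0$. You then obtain $(\partial_t^\alpha u_{p,\alpha},\psi)_X=-(u_{p,\alpha},A\psi)_X$, and the term $(f,\psi)_X\,p$, on which every later argument rests, disappears. The sentence ``read as $pf$'' states the right answer, but it does not follow from what you derived. That term is $J^\alpha$ of a Dirac mass at $t=0$, which your identity silently drops.

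\paragraph{Repair: split before differentiating.}
This is the order of operations the paper uses.
\begin{itemize}
\item Substituting \eqref{eq2.4} into Lemma~\ref{lem2} gives \eqref{eq2.6}: $u_{p,\alpha}=(J^\alpha p)f-\sum_n\bigl(t^{2\alpha-1}E_{\alpha,2\alpha}(-\lambda_nt^\alpha)*p\bigr)\lambda_nP_nf$.
\item The first term needs no kernel identity. By the semigroup property, $\partial_t^\gamma J^\alpha p=J^{\alpha-\gamma}p$, which equals $p$ when $\gamma=\alpha$.
\item The remaining kernel starts at the power $t^{2\alpha-1}$. Hence \eqref{eq2.7} is valid for every $0<\gamma<2\alpha$, including $\gamma=\alpha$, and Lemma~\ref{lem1} applies.
\end{itemize}
With this reordering, the rest of your argument goes through unchanged: the interchange of $\partial_t^\gamma$ with the series, and parts (ii) and (iii).
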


\textbf{Proof of Lemma \ref{lem3} (i)}.

Since
$$
\left(J^{\gamma}\left(t^{2 \alpha-1-\gamma} E_{\alpha, 2 \alpha-\gamma}\left(-\lambda_{n} t^{\alpha}\right)\right)\right)(t)=t^{2 \alpha-1} E_{\alpha, 2 \alpha}\left(-\lambda_{n} t^{\alpha}\right),
$$
for $t>0$ and $0<\gamma<2 \alpha$ (e.g., formula (1.100) on p. 25 in \cite{5}), we have
\begin{equation}\label{eq2.7}
\left(J^{\gamma}\right)^{-1}\left(t^{2 \alpha-1} E_{\alpha, 2 \alpha}\left(-\lambda_{n} t^{\alpha}\right)\right)(t)=t^{2 \alpha-1-\gamma} E_{\alpha, 2 \alpha-\gamma}\left(-\lambda_{n} t^{\alpha}\right).
\end{equation}
Applying Lemma \ref{lem1} and \eqref{eq2.6}, we obtain
$$
\begin{aligned}
& \left(J^{\gamma}\right)^{-1}\left(t^{2 \alpha-1} E_{\alpha, 2 \alpha}\left(-\lambda_{n} t^{\alpha}\right) * p\right)(t)\left(\lambda_{n} P_{n} f, P_{n} \psi\right)_{X} \\
= & \left(t^{2 \alpha-1-\gamma} E_{\alpha, 2 \alpha-\gamma}\left(-\lambda_{n} t^{\alpha}\right) * p\right)(t)\left(\lambda_{n} P_{n} f, P_{n} \psi\right)_{X}.
\end{aligned}
$$
Therefore, by \eqref{eq2.6}, the proof of Lemma \ref{lem3} (i) is finished.

\textbf{Proof of Lemma \ref{lem3} (ii).}

Since (\ref{estimateML}) we have
$$
\begin{aligned}
& \left|\sum_{n=1}^{\infty}(t-s)^{2 \alpha-1-\gamma} E_{\alpha, 2 \alpha-\gamma}\left(-\lambda_{n}(t-s)^{\alpha}\right)\left(\lambda_{n} P_{n} f, \psi\right)_{X}\right| \\
& \leq C|t-s|^{2 \alpha-1-\gamma} \sum_{n=1}^{\infty}\left|\left(\lambda_{n} P_{n} f, \psi\right)_{X}\right|.
\end{aligned}
$$
Moreover,
$$
\sum_{n=1}^{\infty}\left|\left(\lambda_{n} P_{n} f, \psi\right)_{X}\right|=\sum_{n=1}^{\infty}\left|\left(P_{n} f, P_{n} A \psi\right)_{X}\right| \leq \sum_{n=1}^{\infty}\left\|P_{n} f\right\|_{X}\left\|P_{n} A \psi\right\|_{X}
$$
\begin{equation}\label{eq2.9}
\leq\left(\sum_{n=1}^{\infty}\left\|P_{n} f\right\|_{X}^{2}\right)^{\frac{1}{2}}\left(\sum_{n=1}^{\infty}\left\|P_{n} A \psi\right\|_{X}^{2}\right)^{\frac{1}{2}}=\|f\|_{X}\|A \psi\|_{X}.
\end{equation}
Thus, the proof of Lemma \ref{lem3} (ii) is complete.

\textbf{Proof of Lemma \ref{lem3} (iii).}

Setting
$$
R_{N}(t):=-\sum_{n=1}^{N}\left(t^{2 \alpha-1} E_{\alpha, 2 \alpha}\left(-\lambda_{n} t^{\alpha}\right) * p\right)(t)\left(\lambda_{n} P_{n} f, \psi\right)_{X}, \quad N \in \mathbb{N},
$$
by \eqref{eq2.7} we have
$$
\left(J^{\gamma}\right)^{-1} R_{N}(t)=-\sum_{n=1}^{N}\left(t^{2 \alpha-\gamma-1} E_{\alpha, 2 \alpha-\gamma}\left(-\lambda_{n} t^{\alpha}\right) * p\right)(t)\left(\lambda_{n} P_{n} f, \psi\right)_{X}.
$$
Therefore, the Young inequality, \eqref{estimateML} and \eqref{eq2.9} yield
$$
\begin{aligned}
& \left\|\left(J^{\gamma}\right)^{-1} R_{N}\right\|_{L^{2}(0, T)} \leq \sum_{n=1}^{N}\left\|t^{2 \alpha-\gamma-1} E_{\alpha, 2 \alpha-\gamma}\left(-\lambda_{n} t^{\alpha}\right)\right\|_{L^{1}(0, T)}\|p\|_{L^{2}(0, T)}\left|\left(\lambda_{n} P_{n} f, \psi\right)_{X}\right| \\
\leq & C \int_{0}^{T} t^{2 \alpha-\gamma-1} d t\|p\|_{L^{2}(0, T)}\|f\|_{X}\|A \psi\|_{X}.
\end{aligned}
$$
Hence, $R_{N}$ converges to $R$ as $N \rightarrow \infty$ in $H_{\gamma}(0, T)$. Thus, the proof of (iii) and so Lemma \ref{lem3} is completed.

\begin{lemma}\label{lem311}
    Let $\psi \in \mathcal{D}(A^2)$, $f \in X$, and $\alpha\in (0,1) \cup (1,2)$. Then,
\begin{align*}
& (u_{p,\alpha}(t), \, \psi)_X
= \sum_{n=1}^\infty \left( t^{\alpha-1}E_{\alpha,\alpha}(-\lambda_nt^{\alpha})\, *\,
p(t)\right)(P_nf, \, \psi)_X\\
=& \sum_{n=1}^\infty \left( \frac{t^{\alpha-1}}{\Gamma(\alpha)}\, *\, p(t)\right)
(P_nf, \, \psi)_X
- \sum_{n=1}^\infty \left( \frac{t^{2\alpha-1}}{\Gamma(2\alpha)}\,*\, p(t)\right)
(P_nf,\lambda_n\psi)_X\\
+& \sum_{n=1}^\infty (t^{3\alpha-1}E_{\alpha,3\alpha}(-\lambda_nt^{\alpha})\, *\, p(t))
(P_nf, \, \lambda_n^2\psi)_X
=: I_1(t) + I_2(t)+ S_{p, \alpha}(t),
\end{align*}
and
\begin{equation}\label{S}
\vert S_{p, \alpha}(t)\vert \leq C t^{3\alpha} \|p\|_{L^{\infty}(0, T)}\|f\|_{X}\|A^2 \psi\|_{X}, \,\, 0\leq t < 1.
\end{equation}
\end{lemma}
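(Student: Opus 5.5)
The plan is to follow the proof of Lemma \ref{lem31}, but to expand the Mittag-Leffler kernel by one more term. Starting from the series representation \eqref{eq2.3} of Lemma \ref{lem2}, I take the scalar product with $\psi$; this is justified because the series converges in $L^2(0,T;\mathcal{D}(A))$. Using $(P_n f,\psi)_X$ termwise gives the first line of the statement. Next I iterate the identity \eqref{eq2.4}. Directly from the series \eqref{eq2.2} one checks that $E_{\alpha,2\alpha}(z)=\frac{1}{\Gamma(2\alpha)}+zE_{\alpha,3\alpha}(z)$. Combining this with \eqref{eq2.4} yields
\[
E_{\alpha,\alpha}(z)=\frac{1}{\Gamma(\alpha)}+\frac{z}{\Gamma(2\alpha)}+z^2E_{\alpha,3\alpha}(z).
\]
I substitute $z=-\lambda_n t^\alpha$ and multiply by $t^{\alpha-1}$. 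This gives the three kernels $\frac{t^{\alpha-1}}{\Gamma(\alpha)}$, $-\lambda_n\frac{t^{2\alpha-1}}{\Gamma(2\alpha)}$ and $\lambda_n^2 t^{3\alpha-1}E_{\alpha,3\alpha}(-\lambda_nt^\alpha)$. Since $\lambda_n$ is real and $A$ is self-adjoint, I move $\lambda_n$ and $\lambda_n^2$ onto $\psi$ using $(P_nf,\lambda_n^k\psi)_X=(P_nf,P_nA^k\psi)_X$; this is legitimate because $\psi\in\mathcal{D}(A^2)$. That produces $I_1$, $I_2$ and $S_{p,\alpha}$.

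One point needs care: splitting a convergent series into three series requires each piece to converge. For $I_1$, the sum $\sum_n(P_nf,\psi)_X=(f,\psi)_X$ converges by \eqref{eq2.1}. For $I_2$, the sum $\sum_n(P_nf,A\psi)_X=(f,A\psi)_X$ converges since $\psi\in\mathcal{D}(A)$. For the third series, absolute convergence follows from the bound below. Since $I_1$ and $I_2$ are sums of a convergent scalar series times a fixed function, they simplify to $(f,\psi)_X J^\alpha p$ and $-(f,A\psi)_X J^{2\alpha}p$, although the statement keeps the series form.

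For the estimate \eqref{S} I imitate the proof of \eqref{R}. The bound \eqref{estimateML} gives $|E_{\alpha,3\alpha}(-\lambda_n\tau^\alpha)|\le C$ uniformly in $n$ and $\tau>0$. Hence
\[
\bigl|(\tau^{3\alpha-1}E_{\alpha,3\alpha}(-\lambda_n\tau^\alpha)*p)(t)\bigr|\le C\|p\|_{L^\infty(0,T)}\int_0^t\tau^{3\alpha-1}d\tau=\frac{C}{3\alpha}\,t^{3\alpha}\|p\|_{L^\infty(0,T)}.
\]
Summing over $n$ requires the analogue of \eqref{eq2.9}, namely
\[
\sum_n|(P_nf,P_nA^2\psi)_X|\le\sum_n\|P_nf\|_X\|P_nA^2\psi\|_X\le\|f\|_X\|A^2\psi\|_X,
\]
by Cauchy--Schwarz and Parseval. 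This also proves the absolute and uniform convergence of the $S_{p,\alpha}$ series, which justifies the splitting above.

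The only real subtlety is the validity of \eqref{estimateML} for the index $3\alpha$ when $\alpha\in(1,2)$, since then $3\alpha$ may exceed $3$. The cited bound (Podlubny's Theorem 1.6) holds for any real second index when $\alpha<2$, so boundedness on the negative real axis still holds. Beyond that I expect no obstacle; the argument is a routine extension of Lemma \ref{lem31}.
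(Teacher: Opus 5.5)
Your proposal is correct and follows the paper's proof. The paper likewise splits off the first two terms of the series to get
$t^{\alpha-1}E_{\alpha,\alpha}(-\lambda_n t^\alpha)=\frac{t^{\alpha-1}}{\Gamma(\alpha)}-\frac{\lambda_n t^{2\alpha-1}}{\Gamma(2\alpha)}+\lambda_n^2 t^{3\alpha-1}E_{\alpha,3\alpha}(-\lambda_n t^\alpha)$, then bounds $S_{p,\alpha}$ via \eqref{estimateML} and the Cauchy--Schwarz argument of \eqref{eq2.9} with $A^2\psi$ in place of $A\psi$. Your extra checks (convergence of each split series, and that \eqref{estimateML} holds for the second index $3\alpha$) supply details the paper leaves implicit.
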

\begin{proof}The first part of the lemma follows directly from the obvious equality:
    \begin{align*}
& t^{\alpha-1}E_{\alpha,\alpha}(-\lambda_nt^{\alpha})
= t^{\alpha-1}\left( \frac{1}{\Gamma(\alpha)}
- \frac{\lambda_n t^{\alpha}}{\Gamma(2\alpha)}
+ \sum_{k=2}^\infty \frac{(-\lambda_nt^{\alpha})^k}{\Gamma(\alpha k + \alpha)}
\right)\\
=& \frac{t^{\alpha-1}}{\Gamma(\alpha)}
- \frac{\lambda_n t^{2\alpha-1}}{\Gamma(2\alpha)}
+ t^{3\alpha-1}\lambda_n^2E_{\alpha,3\alpha}(-\lambda_nt^{\alpha}).
\end{align*}
Estimate (\ref{estimateML}) implies (\ref{S}):
\[\vert S_{p, \alpha}(t)\vert \le C \|f\|_{X}\|A^2 \psi\|_{X}\int^t_0 (t-s)^{3\alpha-1}\vert p(s)\vert ds
\leq C t^{3\alpha} \|p\|_{L^{\infty}(0, T)}\|f\|_{X}\|A^2 \psi\|_{X}.
\]
\end{proof}

\section{Proof of Theorems \ref{thm1} and \ref{thm11}}\label{sec3}

First, we prove the following simple lemma.

\begin{lemma}\label{pp0}
    Let $p \in \mathcal{P}$, that is, there exist constants $C > 0$, $m \geq 0$, $a > 0$ with $a - m \geq \varepsilon > 0$, and $p_0 \neq 0$ such that
$$
\bigl| p(s) - p_0 s^{m} \bigr| \leq C s^{a} \quad \text{for all } s \in [0,T].
$$
Then the following asymptotic representation holds:
$$
p(s) = p_0 s^{m} + O\bigl(s^{m + \varepsilon}\bigr) \qquad \text{as } s \to 0^+.
$$
Moreover, this estimate is uniform on the interval $[0,1]$ whenever $T \geq 1$.
\end{lemma}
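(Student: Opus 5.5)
The claim follows directly from the defining inequality of $\mathcal{P}$, once the exponent $a$ is compared with $m+\varepsilon$ on the relevant range of $s$. I would split the argument into the local statement near $0$ and the uniformity statement on $[0,1]$.

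\textbf{Local statement.} By definition of $\mathcal{P}$ we have, for all $s\in[0,T]$,
\[
|p(s)-p_0s^m|\le C s^a .
\]
Since $a-m\ge\varepsilon$, we have $a\ge m+\varepsilon$. For $0<s\le 1$ this gives $s^a = s^{m+\varepsilon}\, s^{a-m-\varepsilon}\le s^{m+\varepsilon}$, because $a-m-\varepsilon\ge 0$ and $s\le1$. Hence
\[
|p(s)-p_0s^m|\le C s^{m+\varepsilon}, \qquad 0< s\le \min\{1,T\}.
\]
The inequality also holds trivially at $s=0$, since continuity of $p$ and $m\ge 0$ are compatible with the bound $|p(0)-p_0 0^m|\le 0$ there. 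This is precisely $p(s)=p_0s^m+O(s^{m+\varepsilon})$ as $s\to0^+$, with the same constant $C$.

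\textbf{Uniformity on $[0,1]$.} When $T\ge1$, the interval $[0,1]$ is contained in $[0,T]$. The bound above then holds for every $s\in[0,1]$ with the single constant $C=C(p)$, so the $O$-estimate is uniform there.

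If $T<1$, one would instead note that on $[\min\{1,T\},T]$ the ratio $s^a/s^{m+\varepsilon}$ is bounded by $\max\{1,T^{a-m-\varepsilon}\}$, which only enlarges the constant. This case is not needed for the statement.

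\textbf{Main obstacle.} There is no real difficulty. The only point requiring care is the direction of the comparison $s^a\le s^{m+\varepsilon}$, which is valid only for $s\le1$. This is exactly why the uniform statement is restricted to $[0,1]$ and the hypothesis $T\ge1$ is imposed.
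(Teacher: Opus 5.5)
Your proof is correct and is essentially the paper's argument: both write $s^a = s^{m+\varepsilon}s^{a-m-\varepsilon}$ and bound the second factor. You bound it by $1$ for $s\le 1$, which keeps the constant $C$; the paper bounds it by $T^{a-m-\varepsilon}$ for $s\le T$, giving the constant $C\,T^{a-m-\varepsilon}$ on all of $[0,T]$. The paper's version shows that the restriction $s\le 1$ only affects the constant, so your closing remark overstates its role; also, in your $T<1$ paragraph the cases are swapped, since extending beyond $s=1$ is only an issue when $T>1$.
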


\begin{proof}
By assumption, there exists a remainder term $r(s)$ satisfying
$$
p(s) = p_0 s^{m} + r(s), \qquad |r(s)| \leq C s^{a} \quad \text{for all } s \in [0,T].
$$
Since $a \geq m + \varepsilon$, we have for every $s \in [0,T]$ the inequality
$$
s^{a} = s^{m + \varepsilon} \cdot s^{a - m - \varepsilon} \leq s^{m + \varepsilon} \cdot T^{a - m - \varepsilon},
$$
where the last step follows from $s \leq T$ and the non-negative exponent $a - m - \varepsilon \geq 0$. Therefore,
$$
|r(s)| \leq C \cdot T^{a - m - \varepsilon} \cdot s^{m + \varepsilon}.
$$
Setting
$$
C' := C \cdot T^{a - m - \varepsilon}
$$
(a constant that depends only on $p$, $T$ and the fixed $\varepsilon > 0$), we obtain
$$
|r(s)| \leq C' \, s^{m + \varepsilon} \quad \text{for all } 0 \leq s \leq T.
$$
This shows that
$$
p(s) = p_0 s^{m} + O\bigl(s^{m + \varepsilon}\bigr) \quad \text{as } s \to 0^+,
$$
and the $O$-estimate holds uniformly on the whole interval $[0,\min(1,T)]$.
\end{proof}

To prove Theorem \ref{thm1} we apply Lemma \ref{lem31}:
\[
\left(u_{p, \alpha}(t), \psi\right)_{X}=(f, \psi)_{X}\left(J^{\alpha} p\right)(t)+R_{p, \alpha}(t).
\]
Then, using $\psi \in \mathcal{D}(A)$, and  $p\in \mathcal{P}$ we have (see Lemma \ref{pp0})
$$
\vert R_{p, \alpha}(t)\vert \le  Ct^{2\alpha+m}, \quad 0 \le t <1,
$$
and
$$
 (J^{\alpha} p) (t)
= \frac{p_0}{\Gamma(\alpha)} \int^t_0 (t-s)^{\alpha-1}s^{m_1} ds
+ O\left( \int^t_0 (t-s)^{\alpha-1} s^{m+\varepsilon} ds \right)
$$
$$
= \frac{p_0\Gamma(m+1)}{\Gamma(\alpha+m+1)}t^{\alpha+m}
+ O(t^{\alpha+m+\varepsilon}), \quad 0\le t <1.
$$
Therefore,
$$
(u_{p,\alpha}(t), \psi)_X = p_1t^{\alpha+m} + O(t^{2\alpha+m})
+ O(t^{\alpha+m+\varepsilon})
= p_1t^{\alpha+m} + o(t^{\alpha+m}),
$$
where $p_1\ne 0$ is a constant.  Since $q\in \mathcal{P}$, similarly (see Lemma \ref{pp0} and note $\beta\in (0, 1)\cup (1,2)$),
$$
(u_{q,\beta}(t), \widetilde{\psi})_X = q_1t^{\beta+n} + o(t^{\beta+n}),
\quad 0\le t <1, \quad \text{with}\,\, q_1 \neq 0.
$$
By the conditions of the theorem,
$$
(u_{p,\alpha}(t), \, \psi)_X = (u_{q,\beta}(t), \, \widetilde{\psi})_X
\quad \text{for} \,\, 0\le t <1,                      
$$
we obtain
\begin{equation}\label{pq}
p_1t^{\alpha+m} + o(t^{\alpha+m}) = q_1t^{\beta+n} + o(t^{\beta+n}),
\quad 0\le t <1.                           
\end{equation}

Moreover, we have
\begin{lemma}\label{gamma1,2}
    Let $\gamma_1, \gamma_2 > 0$ and let $d_1, d_2 \ne 0$.
If
$$
d_1t^{\gamma_1} + o(t^{\gamma_1}) = d_2t^{\gamma_2} + o(t^{\gamma_2})
$$
as $t \downarrow 0$, then $\gamma_1 =\gamma_2$ and $d_1 = d_2$.
\end{lemma}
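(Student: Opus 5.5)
The plan is to compare the two sides after dividing by a suitable power of $t$ and letting $t \downarrow 0$. Throughout, the symbol $o(t^{\gamma})$ denotes a function $r(t)$ with $r(t)/t^{\gamma} \to 0$ as $t \downarrow 0$. The identity is assumed to hold on some interval $(0,t_0)$.

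First we prove $\gamma_1 = \gamma_2$ by contradiction. Suppose $\gamma_1 \neq \gamma_2$; by symmetry we may take $\gamma_1 < \gamma_2$. Dividing the identity by $t^{\gamma_1}$ gives
\[
d_1 + \frac{o(t^{\gamma_1})}{t^{\gamma_1}} = t^{\gamma_2-\gamma_1}\Bigl(d_2 + \frac{o(t^{\gamma_2})}{t^{\gamma_2}}\Bigr).
\]
As $t \downarrow 0$, the left-hand side tends to $d_1$. On the right, the bracket tends to $d_2$ and is therefore bounded near $0$, while $t^{\gamma_2-\gamma_1} \to 0$ because $\gamma_2-\gamma_1>0$. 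So the right-hand side tends to $0$. This gives $d_1 = 0$, contradicting $d_1 \neq 0$. Hence $\gamma_1 = \gamma_2$.

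Next, with $\gamma := \gamma_1 = \gamma_2$, we divide the identity by $t^{\gamma}$ and pass to the limit:
\[
d_1 = \lim_{t\downarrow 0}\Bigl(d_1 + \frac{o(t^{\gamma})}{t^{\gamma}}\Bigr) = \lim_{t\downarrow 0}\Bigl(d_2 + \frac{o(t^{\gamma})}{t^{\gamma}}\Bigr) = d_2.
\]

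No step is a genuine obstacle. The only point needing care is that the two $o$-terms are possibly different functions. This does not matter, since each quotient tends to $0$ separately. The lemma is then applied to \eqref{pq} with $\gamma_1=\alpha+m$ and $\gamma_2=\beta+n$, giving $\alpha+m=\beta+n$.
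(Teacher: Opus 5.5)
Your proof is correct and follows essentially the same route as the paper: divide by the smaller power of $t$, let $t\downarrow 0$ to force a nonzero leading coefficient to vanish, and then compare coefficients once the exponents agree. The only cosmetic difference is that you use symmetry to treat the single case $\gamma_1<\gamma_2$ (dividing by $t^{\gamma_1}$), whereas the paper works out $\gamma_1>\gamma_2$ and says the other case is similar.
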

\textbf{Proof of Lemma \ref{gamma1,2}}.

Assume that $\gamma_1 > \gamma_2$.  Then,
$$
d_1t^{\gamma_1-\gamma_2} + o(t^{\gamma_1-\gamma_2}) = d_2 + o(1).
$$
Letting $t \downarrow 0$, we obtain $d_2=0$, which is a contradiction
in view of $d_2 \ne 0$.
Hence, $\gamma_1 \le \gamma_2$.  Similarly $\gamma_1 < \gamma_2$ yields
a contradiction.  Therefore, $\gamma_1 = \gamma_2$, and so
$(d_1-d_2)t^{\gamma_1} + o(t^{\gamma_1}) = 0$, that is,
$d_1-d_2 + o(1) = 0$.  Letting $t\downarrow 0$, we obtain $d_1 = d_2$.

Thus, the proof of the lemma is complete.

Applying Lemma to (\ref{pq}), we obtain
$$
\alpha+m = \beta + n, \quad \mbox{that is,} \quad
\alpha-\beta = n - m.                      
$$
By $m, n \in \mathbb{N}$, we obtain $n - m \in \mathbb{Z}$.  Since
both of $\alpha$ and $\beta$ belong either $(0,1)$ or $(1,2)$ implies $-1<\alpha-\beta < 1$. we have only a
possibility $\alpha- \beta = 0$.
Thus, the first part of Theorem \ref{thm1} is proved.

Next we will prove that $p=q$ in ( $0, T$ ) under the assumption $A=\widetilde{A}$ and $\alpha=\beta$ (as proved above). Setting $r:=p-q$ and $y:=u_{p, \alpha}-u_{q, \alpha}$, we have
$$
\partial_{t}^{\alpha} y(t)+A y=r(t) f, \quad y\in H_{\alpha}(0, T ; X),
$$
and
$$
(y(t), \psi)_{X}(t)=0, \quad 0<t<T .
$$

It suffices to prove that $r=0$ in ( $0, T$ ). Application of Lemma \ref{lem3} (i) yields
$$
\begin{aligned}
& 0=\left(\partial_{t}^{\alpha} y(t), \psi\right)_{X} \\
= & (f, \psi)_{X} r(t)-\sum_{n=1}^{\infty} \int_{0}^{t}(t-s)^{\alpha-1} E_{\alpha, \alpha}\left(-\lambda_{n}(t-s)^{\alpha}\right) r(s) d s\left(\lambda_{n} P_{n} f, \psi\right)_{X}.
\end{aligned}
$$
Since $(f, \psi)_{X} \neq 0$ and $r \in L^{\infty}(0, T)$, we obtain
$$
r(t)=\frac{1}{(f, \psi)_{X}} \sum_{n=1}^{\infty} \int_{0}^{t}(t-s)^{\alpha-1} E_{\alpha, \alpha}\left(-\lambda_{n}(t-s)^{\alpha}\right) r(s) d s\left(\lambda_{n} P_{n} f, \psi\right)_{X},
$$
and so by Lemma \ref{lem3} (ii) and \eqref{eq2.9}, we can derive
$$
|r(t)| \leq C \int_{0}^{t}(t-s)^{\alpha-1}|r(s)| d s, \quad 0<t<T.
$$

The generalized Gronwall inequality (see, e.g.,~\cite{3}, Chapter 4.1) implies that $r(t)=0$ for all $0<t<T$. This immediately yields the second part of the theorem. The proof of Theorem~\ref{thm1} is thus complete.

Now let us proceed to the proof of Theorem \ref{thm11}.
Since \(p \in \widetilde{\mathcal{P}}\), in view of Lemma \ref{lem311}, we can estimate and calculate:
$$
\vert S_{p, \alpha}(t)\vert 
\le Ct^{3\alpha+m},
$$
and (see Lemma \ref{pp0})
$$
I_1(t) = (f,\psi)_X \frac{p_0\Gamma(m+1)}{\Gamma(\alpha+m+1)}
t^{\alpha+m} + O(t^{\alpha+a})
=: p_1t^{\alpha+m} + O(t^{\alpha+a}),
$$
and
\begin{align*}
& I_2(t) = (f,A\psi)_X \left( \frac{t^{2\alpha-1}}{\Gamma(2\alpha)}\, *\,
p(t) \right)
= (f,A\psi)_X \frac{p_0\Gamma(m+1)}{\Gamma(2\alpha+m+1)}
t^{2\alpha+m} + O(t^{2\alpha+a})                           \\
=: & p_2t^{2\alpha+m} + O(t^{2\alpha+a}).
\end{align*}
We can similarly calculate $(u_{q,\beta}(t), \, \widetilde{\psi})_X$,  so that
we can obtain
\begin{align*}
& p_1t^{\alpha+m} + O(t^{\alpha+a}) + p_2t^{2\alpha+m}
+ O(t^{2\alpha+a}) + O(t^{3\alpha+m}) \\
= & q_1t^{\beta+n} + O(t^{\beta+b}) + q_2t^{2\beta+n}
+ O(t^{2\beta+b}) + O(t^{3\beta+n}).
\end{align*}
Since \(p, q \in \widetilde{\mathcal{P}}\) we have $a-m\geq 2$, and $b-n\geq 2$. Therefore, as $t \downarrow 0$ one has
\[
p_1t^{\alpha+m} +  p_2t^{2\alpha+m}
+ o(t^{2\alpha+m}) 
=  q_1t^{\beta+n} + q_2t^{2\beta+n}
+ o(t^{2\beta+n}).
\] 

Now we will apply the following lemma, which is easily proved by repeated application of Lemma  \ref{gamma1,2} (note that $p_1, q_1, p_2, q_2 \ne 0$).
\begin{lemma}\label{gamma1,2,3,4}
Let \(\gamma_1,\gamma_2 > 0\), \(\beta_1 > \gamma_1\), \(\beta_2 > \gamma_2\), and \(a_1,a_2,b_1,b_2 \neq 0\). If
\[
a_1 t^{\gamma_1} + b_1 t^{\beta_1} + o(t^{\beta_1}) = a_2 t^{\gamma_2} + b_2 t^{\beta_2} + o(t^{\beta_2})
\quad \text{as} \quad t \downarrow 0,
\]
then necessarily
\[
\gamma_1 = \gamma_2, \quad \beta_1 = \beta_2, \quad a_1 = a_2, \quad b_1 = b_2.
\]
\end{lemma}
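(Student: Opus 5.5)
First apply Lemma \ref{gamma1,2} directly to the given identity. Since $\beta_1>\gamma_1$, the expression $b_1 t^{\beta_1} + o(t^{\beta_1})$ is $o(t^{\gamma_1})$, so the left-hand side equals $a_1 t^{\gamma_1} + o(t^{\gamma_1})$. In the same way the right-hand side equals $a_2 t^{\gamma_2} + o(t^{\gamma_2})$. Because $a_1,a_2\neq 0$, Lemma \ref{gamma1,2} yields $\gamma_1=\gamma_2$ and $a_1=a_2$.

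Next, cancel the now identical leading terms $a_1t^{\gamma_1}=a_2t^{\gamma_2}$ from both sides. This leaves
\[
b_1 t^{\beta_1} + o(t^{\beta_1}) = b_2 t^{\beta_2} + o(t^{\beta_2}), \qquad t\downarrow 0,
\]
with $b_1,b_2\neq 0$ and $\beta_1,\beta_2>0$ (indeed $\beta_i>\gamma_i>0$). A second application of Lemma \ref{gamma1,2} gives $\beta_1=\beta_2$ and $b_1=b_2$.

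There is no real obstacle; the only points to watch are the following.

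\begin{itemize}
\item The first-order reduction uses the strict inequality $\beta_i>\gamma_i$, which is what makes $t^{\beta_i}=o(t^{\gamma_i})$. Without it the second term could not be absorbed into the remainder of the first.
\item The cancellation must be exact equality of functions for small $t$, not merely of asymptotic classes. This is legitimate because, once $\gamma_1=\gamma_2$ and $a_1=a_2$ are known, the two leading terms are literally the same function.
\end{itemize}

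The nonvanishing of $a_i,b_i$ is exactly what Lemma \ref{gamma1,2} requires at each of the two steps. In the application to Theorem \ref{thm11}, this lemma gives $\alpha+m=\beta+n$ and $2\alpha+m=2\beta+n$, and subtracting these yields $\alpha=\beta$ with no a priori restriction on the intervals.
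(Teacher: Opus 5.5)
Your proof is correct and follows the paper's route exactly: the paper only remarks that the lemma follows by repeated application of Lemma \ref{gamma1,2}, and you have supplied the two steps. First you absorb $b_i t^{\beta_i}+o(t^{\beta_i})$ into $o(t^{\gamma_i})$ using $\beta_i>\gamma_i$; then you cancel the now identical leading terms and apply Lemma \ref{gamma1,2} again. Your side observation is also valid: subtracting $\alpha+m=\beta+n$ from $2\alpha+m=2\beta+n$ gives $\alpha=\beta$ directly, which is a shorter finish than the paper's case analysis in the proof of Theorem \ref{thm11}.
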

Thus $p_1 = q_1$, $p_2 = q_2$ and $\alpha +m = \beta +n$, $2\alpha +m = 2\beta +n$. Recall $\alpha, \beta \in (0,1)\cup (1,2)$. As we have established above, if both $\alpha, \beta$ belong to either $(0,1)$ or $(1,2)$, then from equality $\alpha +m = \beta +n$ we obtain $\alpha = \beta$.

Now, without loss of generality, let us assume that $\alpha \in (0,1)$ and $\beta \in (1,2)$. Then equality $\alpha +m = \beta +n$ implies $m+1 =n$, but this contradicts equality $2\alpha +m = 2\beta +n$. Consequently, both $\alpha, \beta$  reside in the same set among $(0,1)$ or $(1,2)$, and moreover, $\alpha = \beta$. Thus Theorem \ref{thm11} is proved.

\section{The case $\alpha\in (0,1)$ and 
$p\in \mathcal{P}_0$}\label{sec4}
\begin{proof}[Proof of Theorem~\ref{thm5}]
Assume that $p \in \mathcal{P}_0$, $p \not\equiv 0$ in $(0,T)$, and 
\[
\partial_t^\alpha \bigl(u_{p,\alpha}, \psi\bigr)_X \in H_\theta(0,T).
\]
By Lemma~\ref{lem3}(i), for $p \in L^2(0,T)$, we have
\[
(f,\psi)_X\, p(t)
= \partial_t^\alpha \bigl(u_{p,\alpha}(t), \psi\bigr)_X 
- \partial_t^\alpha R_{p,\alpha}(t),
\quad 0<t<T.
\]
Choosing $\gamma < 2\alpha$ arbitrarily close to $2\alpha$ in Lemma~\ref{lem3}(iii), 
we obtain $\partial_t^\alpha R_{p,\alpha} \in H_{\alpha-\varepsilon}(0,T)$ for all sufficiently small $\varepsilon>0$. Hence,
\[
(f,\psi)_X\, p \in H_{\min\{\theta,\alpha-\varepsilon\}}(0,T).
\]

Since $(f,\psi)_X \neq 0$, it follows that 
\[
p \in H_{\min\{\theta,\alpha-\varepsilon\}}(0,T).
\]
Because $\alpha > \tfrac{1}{2}$, we can choose $\varepsilon>0$ sufficiently small so that
\[
\gamma := \min\{\theta,\alpha-\varepsilon\} > \tfrac{1}{2}.
\]
Then $p \in H_\gamma(0,T)$ implies $p \in C[0,T]$, and in particular $p(0)=0$. 
This contradicts the assumption $p \in \mathcal{P}_0$. Therefore, $p \equiv 0$ in $(0,T)$.
\end{proof}
\begin{proof}[Proof of Theorem \ref{thm6}] Let $\psi \in \mathcal{D}(A)$,  $(f, \psi)_{X} \neq 0$ and $\lim\limits_{t\to +0} u_{p, \alpha}(t) =0$, $\lim\limits_{t\to +0} \partial_{t}^{\alpha}\left(u_{p, \alpha}, \psi\right)_{X}(t) =0$. Multiplying equation (\ref{sol}) by $\psi$ we get
\[
\left(\partial_{t}^{\alpha}\, u_{p, \alpha}, \psi\right)_{X} = - ( A u_{p, \alpha}, \psi)_{X} + p(t) ( f, \psi)_X.
\]
By Lemma \ref{lem1.1} and conditions of the theorem
\[
\lim_{t\to + 0} \left(\partial_{t}^{\alpha}\, u_{p, \alpha}, \psi\right)_{X} = \lim_{t\to + 0}  \partial_{t}^{\alpha}\,( u_{p, \alpha}, \psi)_{X}  =0.
\]
Since $A$ is self-adjoint, then again by the conditions of the theorem
\[
\lim_{t\to +0} ( A u_{p, \alpha}, \psi)_{X} = \lim_{t\to + 0} ( u_{p, \alpha}, A \psi)_{X} =0.
\]
Then, $p(0)=0$ because $(f, \psi)_{X} \neq 0$. This implies that $p \in C[0, T]$ and $p(0)=0$, which is a contradiction by $p \in \mathcal{P}_0$. Therefore $p \equiv 0$ in $(0, T)$, and the proof of Theorem \ref{thm6} is complete.

\end{proof}

\begin{proof}[Proof of Corollary \ref{cor2}]
    From the conditions of the corollary and from equation (\ref{sol}) it follows that, $p \in H_{\gamma}(0, T)$ with $\gamma>\frac{1}{2}$. On the other hand, we can directly verify that (note that in the integration domain one has $|t-s|^{1+2 \gamma} = (t-s)^{1+2 \gamma}$)
$$
\int_{0}^{a}\left(\int_{a}^{T} \frac{1}{|t-s|^{1+2 \gamma}} d t\right) d s=\frac{1}{2 \gamma} \int_{0}^{a}\left((a-s)^{-2 \gamma}-(T-s)^{-2 \gamma}\right) d s
$$
does not exist for $\gamma>\frac{1}{2}$ since $(T-s)> (a-s)$. Therefore, piecewise constant $p \in H_{\gamma}(0, T)$ with $\gamma>\frac{1}{2}$ yields that $p$ is a constant function in $(0, T)$. Otherwise $p \notin H_{\gamma}(0, T)$. Moreover, $p(0)=0$ by $p \in H_{\gamma}(0, T)$ with $\gamma>\frac{1}{2}$. Thus $p \equiv 0$ in ( $0, T$ ).
\end{proof}

\begin{proof}[Proof of Proposition \ref{prop1}]
    In terms of $(f, \psi)_{X}=0$, it follows from Lemma \ref{lem31}  and  Lemma \ref{lem3} (iii) that
$$
\left(u_{p, \alpha}, \psi\right)_{X}=R_{p, \alpha} \in H_{\gamma}(0, T),
$$
where $\gamma \in(0,2 \alpha)$ is arbitrary. Thus the proof of Proposition \ref{prop1} is complete.
\end{proof}

\section{Inverse Problem 2}\label{sec5}

In this section, we will investigate Inverse Problem 2.

Interest in the study of inverse source identification problems is motivated by their importance in various fields, including mechanics, seismology, medical tomography, and geophysics. The theory and applications of such inverse problems for integer- and fractional-order partial differential equations have been extensively investigated in the literature (see, e.g., the monograph by Kabanikhin~\cite{Kabanikhin} and the survey article by Liu, Li, and Yamamoto~\cite{LiYamamoto3}).

Inverse problems of determining the right-hand side in various subdiffusion equations have been studied by many authors. The corresponding methods of analysis depend on whether the spatial source term $f$ or the temporal component $p(t)$ is unknown (see, e.g.,~\cite{LiYamamoto3}).
When $f$ is unknown, the study was conducted in two directions: $p(t) \equiv 1$  and $p(t) \not\equiv 1$ (see, for example, \cite{AM2020},  \cite{Kirane}, \cite{LiYamamoto3}, \cite{Ruzhansky}, and the literature therein). In both cases, the method of separation of variables was used. In the case of $p(t) \not\equiv 1$, preserving the sign of this function plays a key role (see e.g., \cite{ASh2024} and the literature therein).

Investigating the case of $p(t)$ is unknown and $f$ is known is much more difficult (see \cite{ASh2022}, \cite{LiYamamoto3}, and the literature therein). Here, the definition of $p(t)$ is usually reduced to an integral equation (the Volterra or Fredholm integral equations), and in some cases, the method of compressed mappings is used to solve it (see, for example, \cite{LiYamamoto3}).

In this paper, the solution to Inverse Problem 2 is easily derived from Lemma \ref{lem3}. Let us present a proof of Theorem \ref{thm3}.

\begin{proof} If we put $\gamma = \alpha$ in statement (i) of Lemma \ref{lem3}, we get
\begin{equation}\label{Volterra}
(f, \psi)_{X} \, p(t) = \partial_t^\alpha \Psi (t) + \int_0^t p(s) (t-s)^{\alpha-1} \sum_{n=1}^{\infty} E_{\alpha,  \alpha}(-\lambda_{n} (t-s)^{\alpha}) (P_{n} f, P_n A\psi)_{X} ds.
\end{equation}
By virtue of estimate (\ref{estimateML}), the following estimate holds.:
\begin{equation*}
\sup _{n \in \mathbb{N}, 0<s<t}\left|E_{\alpha,  \alpha}\left(-\lambda_{n}(t-s)^{\alpha}\right)\right|=: C_{\alpha}<\infty. 
\end{equation*}
Therefore, the following series is uniformly convergent on $s\in [0, t]$ (see the proof of Lemma \ref{lem3} (ii)):
\[
\sum_{n=1}^{\infty} \big|E_{\alpha,  \alpha}(-\lambda_{n} (t-s)^{\alpha}) (P_{n} f, P_n A\psi)_{X}\big|\leq C_\alpha \|f\|_{X}\|A \psi\|_{X}.
\]
Hence the Volterra equation (\ref{Volterra}) has the unique solution $p(t)$ and moreovere one has (note $(f, \psi)_{X} \neq 0$)
\begin{equation*}
||p||_{C[0, T]} \leq C_{f, \psi}\left[||\partial_t^\alpha \Psi||_{C[0, T]} + C_\alpha ||f||_X ||A\psi||_X \int_0^t (t-s)^{\alpha-1} |p(s)| ds\right],\,\, C_{f, \psi} = \frac{1}{|(f, \psi)_{X}|}.
\end{equation*}
Now the generalized Gronwall inequality (e.g., \cite{4}, \cite{8}) yields 
\begin{equation*}
||p||_{C[0, T]} \leq C_{f, \psi}||\partial_t^\alpha \Psi||_{C[0, T]} E_{\alpha} (C_{f, \psi} C_\alpha ||f||_X ||A\psi||_X \Gamma(\alpha) T^\alpha).
\end{equation*}

If we again substitute $\gamma = \alpha$ into statement (i) of Lemma \ref{lem3}, and use the reasoning above, we will have
\[
||\partial_t^\alpha \Psi||_{C[0, T]} \leq  |(f, \psi)_{X}| \, || p(t)||_{C[0, T]} +  C_\alpha ||f||_X ||A\psi||_X \int_0^t (t-s)^{\alpha-1} |p(s)| ds \leq C_{f, \psi, \alpha} || p(t)||_{C[0, T]},
\]
where the constant $C_{f, \psi, \alpha}$ depends on $f, \psi, $ and $\alpha$.

Combining the last two estimates, we obtain (\ref{stability}).
    
\end{proof}

\begin{remark} If the conditions of Theorem \ref{thm6} are met, the statement of this theorem also follows from relation (\ref{Volterra}).
\end{remark}

\begin{remark} In Theorem \ref{thm3}, the unknown function $p(t)\in C[0, T]$ is uniquely restored without the additional condition $p(t) \in \mathcal{P}$. On the other hand, Theorem \ref{thm1} guarantees the uniqueness of the solution to Inverse Problem 2 - even with an unknown $\alpha$ and without any additional smoothness requirements for the function $\Psi(t)$. Theorem \ref{thm5} essentially states that if $\alpha \in (1/2, 1)$, then - even in the case where $\Psi(t) \neq 0$, but is merely sufficiently smooth (specifically: $\Psi(t) \in H_\theta(0, T; X)$ for $1/2 < \theta < \alpha$) - Inverse Problem 2 possesses a unique solution $p(t) \in \mathcal{P}_0$. Theorem \ref{thm3} showed that if $\theta = \alpha$, then one can also assert the existence of a solution to the Inverse problem 2.
\end{remark}

\section{Concluding remarks}\label{sec6}
This work addresses the inverse problem of simultaneously determining both the exponent $\alpha$ and the multiplier $p(t)$ in the right-hand side of the equation in problem~(\ref{sol}). The quantity $\left(u_{p,\alpha}, \psi\right)_{X}(t)$, for $0<t<T$, is taken as the overdetermination condition, where $\psi \in D(A)$ is an arbitrary element.

The uniqueness of the parameter $\alpha$ is established independently of the choice of the elliptic operator $A$ and the source term. Moreover, the uniqueness of the function $p(t)$ holds not only in the case where the additional condition vanishes, i.e.,
\[
(u_{p,\alpha}, \psi)_{X}(t)=0,
\]
but also when the function $(u_{p,\alpha}, \psi)_{X}(t)$ is sufficiently smooth. The proofs of all results rely on the key Lemmas~\ref{lem3} and~\ref{lem31}.

As a possible generalization of the obtained results, we note the following.

It is plausible that the uniqueness property would still hold if, instead of the operator $A$, one considers the Laplace operator with Dirichlet boundary conditions in a certain $N$-dimensional domain~$\Omega$, and prescribes the overdetermination condition in the form $u_{p,\alpha}(x_0,t)$, $x_0\in \Omega$.

However, this generalization remains a subject for future research.

{\bf Acknowledgements.}
Masahiro Yamamoto was supported by 
Grant-in-Aid for Challenging Research (Pioneering) 21K18142 of 
Japan Society for the Promotion of Science.

\end{document}